\newdimen\bibspace
\numberwithin{equation}{section}
\newtheorem{theorem}{Theorem}[section]
\newtheorem{lemma}[theorem]{Lemma}
\newtheorem{proposition}[theorem]{Proposition}
\newtheorem{remark}[theorem]{Remark}
\def\XXint#1#2#3{{\setbox0=\hbox{$#1{#2#3}{\int}$ }
\vcenter{\hbox{$#2#3$ }}\kern-.6\wd0}}
\begin{document}

\title{Symmetry of solutions of minimal gradient graph equations on punctured space}

\author{Zixiao Liu,\quad Jiguang Bao\footnote{Supported in part by National Natural Science Foundation of China (11871102 and 11631002).}}
\date{\today}

\maketitle

\begin{abstract}
  In this paper, we study  symmetry and existence of solutions of minimal gradient graph equations
   on punctured space $\mathbb R^n\setminus\{0\}$, which include the Monge-Amp\`ere equation, inverse harmonic Hessian equation and the special Lagrangian equation.
  This  extends the classification results of Monge-Amp\`ere equations.
  Under some conditions, we also give the characterization of the solvability on exterior Dirichlet problem in terms of their asymptotic behaviors.
  \\
  \textbf{Keywords: } minimal gradient graph equation, optimal symmetry, existence.\\
  \textbf{MSC 2020: } 35J60;~35B06.
\end{abstract}

\section{Introduction}

We consider the following fully nonlinear elliptic equations
\begin{equation}\label{equ:SPL}
  F_{\tau}(\lambda(D^2u))=C_0\quad\text{in }\mathbb R^n\setminus\{0\},
\end{equation}
where  $C_0$ is a constant,  $\lambda(D^2u)=(\lambda_1,\lambda_2,\cdots,\lambda_n)$ are $n$ eigenvalues of  Hessian matrix $D^2u$, $\tau\in [0,\frac{\pi}{2}]$ and  $$
F_{\tau}(\lambda):=\left\{
\begin{array}{ccc}
\displaystyle  \frac{1}{n} \sum_{i=1}^{n} \ln \lambda_{i}, & \tau=0,\\
\displaystyle  \frac{\sqrt{a^{2}+1}}{2 b} \sum_{i=1}^{n} \ln \frac{\lambda_{i}+a-b}{\lambda_{i}+a+b},
  & 0<\tau<\frac{\pi}{4},\\
  \displaystyle-\sqrt{2} \sum_{i=1}^{n} \frac{1}{1+\lambda_{i}}, & \tau=\frac{\pi}{4},\\
  \displaystyle\frac{\sqrt{a^{2}+1}}{b} \sum_{i=1}^{n} \arctan \displaystyle\frac{\lambda_{i}+a-b}{\lambda_{i}+a+b}, &
  \frac{\pi}{4}<\tau<\frac{\pi}{2},\\
  \displaystyle\sum_{i=1}^{n} \arctan \lambda_{i}, & \tau=\frac{\pi}{2},\\
\end{array}
\right.
$$
$a=\cot \tau, b=\sqrt{\left|\cot ^{2} \tau-1\right|}$. Equations \eqref{equ:SPL} origin from gradient graph $(x, D u(x))$ with zero mean curvature under various metrics equipped on $\mathbb R^n\times\mathbb R^n$.
In 2010, Warren \cite{Warren} first proved that if $u\in C^2(\Omega)$ is a solution of $F_{\tau}(\lambda(D^2u))=C_0$ in $\Omega\subset\mathbb R^n$, then the volume of $(x,Du(x))$ is a maximal for $\tau\in [0,\frac{\pi}{4})$ and minimal for $\tau\in(\frac{\pi}{4},\frac{\pi}{2}]$ among all homologous, $C^1$, space-like $n$-surfaces in $(\mathbb R^n\times\mathbb R^n,g_{\tau})$, where
\begin{equation*}
g_{\tau}=\sin \tau \delta_{0}+\cos \tau g_{0},\quad\tau \in\left[0, \frac{\pi}{2}\right],
\end{equation*}
is the linearly combined metric of standard Euclidean metric
\begin{equation*}
\delta_{0}=\sum_{i=1}^{n} d x_{i} \otimes d x_{i}+\sum_{j=1}^{n} d y_{j} \otimes d y_{j},
\end{equation*}
and the pseudo-Euclidean metric
\begin{equation*}
g_{0}=\sum_{i=1}^{n} d x_{i} \otimes d y_{i}+ \sum_{j=1}^{n} d y_{j} \otimes d x_{j}.
\end{equation*}

If $\tau=0$, then \eqref{equ:SPL} becomes the Monge-Amp\`ere equation $$
\det D^2u=e^{nC_0}.
$$
For the Monge-Amp\`ere equation on entire $\mathbb R^n$, the classical theorem by J\"orgens \cite{Jorgens}, Calabi \cite{Calabi} and Pogorelov \cite{Pogorelov} states that any convex classical solution of $\operatorname{det}D^{2} u=1$ in $\mathbb{R}^{n}$ must be a quadratic polynomial. See Cheng-Yau \cite{ChengandYau}, Caffarelli \cite{7} and Jost-Xin \cite{JostandXin} for different proofs and extensions.
For the Monge-Amp\`ere equation in exterior domain of $\mathbb R^n$, there are exterior J\"orgens-Calabi-Pogorelov type results by Ferrer-Mart\'{\i}nez-Mil\'{a}n \cite{FMM99} for $n=2$ and Caffarelli-Li \cite{CL}, which state that any  locally  convex solution must be asymptotic to quadratic polynomials (for $n=2$ we need additional $\ln$-term) near infinity. For the Monge-Amp\`ere equation on punctured space $\mathbb R^n\setminus\{0\}$, there are classification results
by J\"orgens \cite{Jorgens} for $n=2$ and Jin-Xiong \cite{Jin-Xiong}
 that shows every  locally convex function of $\det D^2u=1$, modulo an affine transform, has to be $u(x)=\int_{0}^{|x|}\left(r^{n}+c\right)^{\frac{1}{n}} \mathrm{~d} r$ for some $c\geq 0$. For further generalization and refined asymptotics on J\"orgens-Calabi-Pogorelov type results we refer to \cite{Bao-Li-Zhang-CVPDE2015,Peroidic_MA,RemarkMA-2020,Peroidic_MA2} and the references therein.

If $\tau=\frac{\pi}{2}$, then \eqref{equ:SPL} becomes the special Lagrangian equation
\begin{equation}\label{equ-temp-spl}
\sum_{i=1}^{n} \arctan \lambda_{i}\left(D^{2} u\right)=C_0.
\end{equation}
For special Lagrangian equation on entire $\mathbb R^n$, there are Bernstein-type results by Yuan \cite{Yu.Yuan1,Yu.Yuan2}, which state that any classical solution of \eqref{equ-temp-spl}  and
\begin{equation}\label{equ-cond-spl}
   D^2u\geq \left\{
  \begin{array}{lll}
    -KI, & n\leq 4,\\
    -(\frac{1}{\sqrt 3}+\epsilon(n))I, & n\geq 5,\\
  \end{array}
  \right.\quad\text{or}\quad C_0>\frac{n-2}{2}\pi,
\end{equation}
must be a quadratic polynomial, where  $I$ denote the identity matrix, $K$ is any constant and $\epsilon(n)$ is a small dimensional constant. For special Lagrangian equation on exterior domain of $\mathbb R^n$, there is an exterior Bernstein-type result by Li-Li-Yuan \cite{Li-Li-Yuan-Bernstein-SPL}, which states that any classical solution of \eqref{equ-temp-spl} on exterior domain with \eqref{equ-cond-spl}  must be asymptotic to quadratic polynomial  (for $n=2$ we need additional $\ln$-term) near infinity. Furthermore,  Chen-Shankar-Yuan \cite{chen2019regularity} proved that a convex viscosity solution of \eqref{equ-temp-spl} must be smooth.

If $\tau=\frac{\pi}{4}$, then \eqref{equ:SPL} is a translated inverse harmonic Hessian equation $\sum_{i=1}^n\frac{1}{\lambda_i(D^2u)}=1$, which is a special form of Hessian quotient equation. For Hessian quotient equation on entire $\mathbb R^n$, there is a Bernstein-type result  by Bao-Chen-Guan-Ji \cite{BCGJ}.

For general $\tau\in[0,\frac{\pi}{2}]$, Warren \cite{Warren} proved the Bernstein-type results under suitable semi-convex conditions by the results of J\"orgens \cite{Jorgens}-Calabi \cite{Calabi}-Pogorelov \cite{Pogorelov}, Flanders \cite{Flanders} and Yuan \cite{Yu.Yuan1,Yu.Yuan2}. In our earlier work \cite{bao-liu2020asymptotic}, we generalized the results to equation \eqref{equ:SPL} on exterior domain and provide finer asymptotic expansions. For further generalization with perturbed right hand side we refer to \cite{bao-liu2021asymptotic} etc.

In this paper, we focus on $n\geq 3$ and prove a type of classification results as in J\"orgens \cite{Jorgens} and Jin-Xiong \cite{Jin-Xiong}.

Hereinafter, we let $x^T$ denote the transpose of vector $x\in\mathbb R^n$, $O^T$ denote the transpose of $n\times n$ matrix $O$,  $\mathtt{Sym}(n)$ denote the set of symmetric $n\times n$ matrix, $\sigma_k(\lambda)$ denote the $k$th elementary form of $\lambda$  and
$DF_{\tau}(\lambda(A))$ denote the matrix with elements being value of partial derivative of $F_{\tau}(\lambda(M))$ w.r.t $M_{ij}$ variable at matrix $A$.
A function $u(x)$ is called generalized symmetric with respect to $A\in \mathtt{Sym}(n)$ if it relies only on the value of
$x^TAx$.
As in \cite{FullyNonlinear}, we say $u\in C^0(\Omega)$ is a viscosity subsolution (supersolution) of \eqref{equ:SPL} in $\Omega\subset\mathbb R^n$ if for any function $\psi\in C^2(\Omega)$ with $D^2\psi>0$, $D^2\psi>(-a+b)I$, $D^2\psi>-I$, $D^2\psi>-(a+b)I$ for $\tau=0$, $\tau\in(0,\frac{\pi}{4})$, $\tau=\frac{\pi}{4}$, $\tau\in(\frac{\pi}{4},\frac{\pi}{2})$ respectively and point $\overline x\in\Omega$ satisfying
$$
\psi(\overline x)=u(\overline x)\quad\text{and}\quad\psi\geq (\leq ) u\quad\text{in }\Omega,
$$
we have
$$
F_{\tau}(\lambda(D^2\psi(\overline x)))\geq (\leq ) C_0.
$$
If $u$ is both viscosity subsolution and supersolution of \eqref{equ:SPL}, we say it is a viscosity solution.

By a direct computation (see for instance \cite{huang2019entire,Warren}), if
 $\lambda_i>-a-b,$ for $i=1,2,\cdots,n$, then
  \begin{equation}\label{equ:identity}
  \sum_{i=1}^n\arctan \frac{\lambda_{i}+a-b}{\lambda_{i}+a+b}=\sum_{i=1}^n\arctan \left(\frac{\lambda_{i}+a}{b}\right)-\frac{n\pi}{4}.
  \end{equation}
Hence replacing $u(x)$ by
\begin{equation}\label{equ:translation}
\left\{
\begin{array}{llllll}
u(x)+\frac{(a-b)}{2}|x|^2, & 0<\tau<\frac{\pi}{4},\\
u(x)+\frac{1}{2}|x|^{2}, & \tau=\frac{\pi}{4},\\
\frac{u(x)}{b}+\frac{a}{2 b}|x|^{2}, & \frac{\pi}{4}<\tau<\frac{\pi}{2},\\
\end{array}
\right.
\end{equation}
it is equivalent to consider
\begin{equation}\label{equ:translatedequ}
G_{\tau}(\lambda(D^2u))=C_0\quad\text{in }\mathbb R^n\setminus\{0\},
\end{equation}
where
$$
G_{\tau}(\lambda):=\left\{
\begin{array}{llllllll}
\displaystyle  \frac{1}{n} \sum_{i=1}^{n} \ln \lambda_{i}, & \tau=0,\\
\displaystyle \frac{\sqrt{a^{2}+1}}{2 b} \sum_{i=1}^{n} \ln \frac{\lambda_{i}}{\lambda_{i}+2b},
  & 0<\tau<\frac{\pi}{4},\\
\displaystyle -\sqrt{2} \sum_{i=1}^{n} \frac{1}{\lambda_{i}}, & \tau=\frac{\pi}{4},\\
\displaystyle \frac{\sqrt{a^{2}+1}}{b} \left(\sum_{i=1}^{n} \arctan \lambda_i-\frac{n\pi}{4}\right) &
  \frac{\pi}{4}<\tau<\frac{\pi}{2},\\
\displaystyle \sum_{i=1}^{n} \arctan \lambda_{i}, & \tau=\frac{\pi}{2}.\\
\end{array}
\right.
$$

Our first main result provides a type of symmetry and relationship between asymptotic at infinity and value at origin.
\begin{theorem}\label{Thm:classification}
  Let $u\in  C^2(\mathbb R^n\setminus\{0\})$ be a classical convex solution of \eqref{equ:translatedequ}.
  Then
  $u\in C^0(\mathbb R^n)$,
  \begin{equation}\label{equ:Entire}
  G_{\tau}(\lambda(D^2u))\geq C_0\quad\text{in }\mathbb R^n
  \end{equation}
  in viscosity sense
  and
  there exist $c\in\mathbb R$, $\beta\in\mathbb R^n$ and $A\in\mathtt{Sym}(n)$ with $G_{\tau}(\lambda(A))=C_0$  such that
  \begin{equation}\label{equ:c-Geq-U0}
  u(x)\leq \frac{1}{2}x^TAx+\beta  x+c\quad\text{in }\mathbb R^n.
  \end{equation}
  Furthermore, $u(x)-\beta x$ is symmetric in eigenvector directions of $A$, i.e.,
  \begin{equation}\label{equ:symmetry}
  u(\widetilde x)-\beta \widetilde x=u(x)-\beta x,\quad\forall~\{\widetilde x\in\mathbb R^n:(O^T\widetilde x)_i=\pm(O^T x)_i,\quad\forall~i=1,2,\cdots,n\},
  \end{equation}
  where $O$ is an orthogonal matrix such that $OAO^T$ is diagonal.
\end{theorem}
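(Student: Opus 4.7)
The plan is to establish the four conclusions in the order stated, using elementary convex analysis for the first two and combining the exterior asymptotic expansion of our earlier work \cite{bao-liu2020asymptotic} with a comparison argument based on linearization of $G_\tau$ for the quadratic upper bound and the symmetry.

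First I would extend $u$ continuously to $0$. Since $n\geq 3$, the punctured space $\mathbb R^n\setminus\{0\}$ is still convex, and a standard convex-analysis fact gives that a $C^2$ convex function on it admits a unique convex extension to $\mathbb R^n$ upon setting $u(0):=\liminf_{x\to 0}u(x)$; finiteness of this value uses the subharmonicity of convex functions (so $\max_{\partial B_r}u$ is nonincreasing in $r$), and continuity at $0$ follows from the convexity of the extension together with the fact that $0$ is interior to $\mathbb R^n$. For \eqref{equ:Entire}, if some admissible test function $\psi$ touched $u$ from above at $0$ with $G_\tau(\lambda(D^2\psi(0)))<C_0$, continuity would yield $G_\tau<C_0-\eta$ on a small ball $B_r$, and a standard lowering perturbation would then force $\psi$ (minus a constant) to touch $u$ from above at some interior point $x_\epsilon\neq 0$, contradicting $G_\tau(\lambda(D^2u(x_\epsilon)))=C_0$ where $u$ is a classical solution.

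For the upper bound \eqref{equ:c-Geq-U0}, I would apply the exterior asymptotic expansion from \cite{bao-liu2020asymptotic} to the restriction $u|_{\{|x|>1\}}$, producing $A\in\mathtt{Sym}(n)$ with $G_\tau(\lambda(A))=C_0$, a vector $\beta\in\mathbb R^n$, and $c_0\in\mathbb R$ satisfying $u(x)=\tfrac12 x^T A x+\beta x+c_0+O(|x|^{2-n})$ as $|x|\to\infty$. Setting $P(x):=\tfrac12 x^T A x+\beta x+c$ with $c\geq\max\{u(0),c_0\}$ gives $u-P\leq 0$ both at $x=0$ (via the extended value) and as $|x|\to\infty$. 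Since $G_\tau$ is concave on the admissible cone of convex Hessians, subtracting the linearization of $G_\tau$ at $D^2P$ shows that $u-P$ is a classical subsolution of the linear elliptic operator $\mathrm{tr}(DG_\tau(\lambda(D^2P))\,D^2\,\cdot\,)=0$ on $\mathbb R^n\setminus\{0\}$; the weak maximum principle on the annulus $B_R\setminus\overline{B_r}$ as $r\to 0^+$ and $R\to\infty$ then yields $u\leq P$ globally.

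For the symmetry \eqref{equ:symmetry}, set $\tilde u(x):=u(x)-\beta x$, which solves $G_\tau=C_0$ with asymptotic $\tfrac12 x^T A x+c_0+O(|x|^{2-n})$ that is invariant under each sign-flip in an eigenbasis of $A$. For each reflection $R_i$ across the hyperplane through $0$ orthogonal to the $i$-th eigendirection of $A$, one has $R_i A R_i^T=A$ since $R_i$ preserves the eigenspaces of $A$, so the pullback $v_i(x):=\tilde u(R_i x)$ is again a convex classical solution on $\mathbb R^n\setminus\{0\}$ with the same asymptotic at infinity and the same value at $0$. Applying the comparison argument of the previous paragraph symmetrically to $\tilde u-v_i$ and $v_i-\tilde u$ (whose boundary values at $\{0\}$ and at $\infty$ both vanish) then forces $\tilde u\equiv v_i$, which is exactly \eqref{equ:symmetry}. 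The main technical obstacle is the justification of the maximum principle used in the last two steps: the linearized operator $\mathrm{tr}(DG_\tau(\lambda(D^2P))\,D^2\,\cdot\,)$ is uniformly elliptic only when $D^2u$ stays in a bounded region of the admissible cone, so quantitative two-sided Hessian bounds near $0$ (from the admissible-cone condition and the interior $C^2$ structure) and at infinity (from the asymptotic expansion) will be required, and the annular comparison must be executed carefully using the modulus of continuity of $u$ at $0$ to close the argument.
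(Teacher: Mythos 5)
Your strategy tracks the paper's: extend $u$ to the origin, upgrade it to a global viscosity subsolution, invoke the exterior asymptotics from \cite{bao-liu2020asymptotic}, then run comparison arguments for the quadratic bound and for the reflection symmetry. Two points need repair.

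First, your justification for the continuous convex extension is wrong: $\mathbb R^n\setminus\{0\}$ is not a convex set for any $n$, so ``a $C^2$ convex function on a convex set'' is not an applicable framework. What is true, and what the paper uses, is the removable-singularity result for locally convex functions on a punctured ball (Proposition 2.1 of \cite{Jin-Xiong}, quoted as Theorem \ref{thm:2.1}): for $n\geq 2$, any locally convex function on $B_1\setminus\{0\}$ extends uniquely to a convex function on $B_1$. Your ``subharmonicity $\Rightarrow\max_{\partial B_r}u$ nonincreasing'' is also backwards (sup-averages of a subharmonic function are nondecreasing in $r$), though of course boundedness near $0$ is easy from convexity; the cleaner route is just to cite the Jin--Xiong lemma. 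Similarly, your sketch for \eqref{equ:Entire} is the idea behind the CLN removable-singularity theorem (Theorem \ref{thm:2.2} here), but the missing ingredient is the ``upperconical'' hypothesis at $\overline x=0$; for convex $u$ this is automatic, and that is precisely the observation the paper makes after Theorem \ref{thm:2.2}. Without noting it, the ``lowering perturbation forces a touch at $x_\epsilon\neq 0$'' step does not obviously go through when $u$ has a cone at the origin.

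Second, in your comparison for \eqref{equ:c-Geq-U0} you work on annuli $B_R\setminus\overline{B_r}$ and therefore must control $u-P$ on $\partial B_r$ as $r\to 0$, which forces the extra assumption $c\geq u(0)$. But you have already established \eqref{equ:Entire}: $u$ is a viscosity \emph{subsolution on all of $\mathbb R^n$}, not merely on the punctured space. So compare $u$ with the exact quadratic solution $V(x)=\tfrac12 x^TAx+\beta x+c_0$ (with $c_0$ the asymptotic constant) on full balls $B_R$: for any $\varepsilon>0$, $u\leq V+\varepsilon$ on $\partial B_R$ for $R$ large, and the comparison principle on $B_R$ (no puncture to avoid) gives $u\leq V+\varepsilon$, then send $\varepsilon\to 0$. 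This yields \eqref{equ:c-Geq-U0} with $c=c_0$ exactly, and it is this identification that makes the subsequent observation $c\geq u(0)$ meaningful (it becomes the link to the existence Theorem \ref{Thm:existence}). Your version, with $c\geq\max\{u(0),c_0\}$ chosen a priori, proves a strictly weaker statement and also introduces a small circularity if one tries to push $c$ down to $c_0$. The symmetry argument is otherwise the same as the paper's (comparison of $\tilde u$ and its reflection $v_i$ via a maximum principle in the punctured space, which here is legitimate because both boundary values vanish), and your concern about ellipticity of the linearization is real but resolvable from the asymptotic expansion and convexity, as the paper implicitly assumes.
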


\iffalse

with either of the following holds  in $\mathbb R^n\setminus\{0\}$
  \begin{enumerate}[(i)]
  \item \label{case-MA} $D^2u>0$  for $\tau=0$;
  \item \label{case-small} $D^2u>0$  for $\tau\in(0,\frac{\pi}{4})$;
  \item \label{case-mid} $D^2u>0$   for $\tau=\frac{\pi}{4}$;
  \item \label{case-large} either
\begin{equation}\label{condition-temp-2}
    D^2u>
    \left\{
    \begin{array}{lll}
      -I, & n\leq 4,\\
      -\min\left\{1,\frac{1}{\sqrt 3}+\epsilon(n)\right\}I, & n\geq 5,\\
    \end{array}
    \right.
    \end{equation}
    where $\epsilon(n)$ is as in \eqref{equ-cond-spl}
     or
   \begin{equation}\label{condition-temp-3}
    D^2u> -I\quad\text{and}\quad \frac{bC_0}{\sqrt{a^2+1}}+\frac{n\pi}{4} >\frac{n-2}{2}\pi
    \end{equation}
for $\tau\in(\frac{\pi}{4},\frac{\pi}{2})$;
\item \label{case-SPL} \eqref{equ-cond-spl} for $\tau=\frac{\pi}{2}$.
  \end{enumerate}

\fi

\begin{remark}
  After a change of coordinate $y:=O^Tx$, $u(y)-\beta y$ is symmetric with respect to $n$ coordinate planes. Especially, $u(y)-\beta y$ is central symmetry i.e., $u(y)-\beta y=u(-y)+\beta y$.
\end{remark}
\begin{remark}
  The solutions of \eqref{equ:SPL} and \eqref{equ:translatedequ} have asymptotic behavior and higher order expansion at infinity, see for instance earlier results by the authors \cite{bao-liu2020asymptotic,bao-liu2021asymptotic}.
\end{remark}
\begin{remark}\label{Rem:nonConvex}
  For $\tau \in\left(\frac{\pi}{4}, \frac{\pi}{2}\right)$ case, the convex assumption on $u$ in Theorem \ref{Thm:classification}  can be relaxed to either of the following
\begin{equation}\label{condition-temp-2}
    \left\{
    \begin{array}{lll}
     D^2u> -I, & n\leq 4,\\
     D^2u> -\min\left\{1,\frac{1}{\sqrt 3}+\epsilon(n)\right\}I, & n\geq 5,\\
     D^2u> -I\quad\text{and}\quad\frac{bC_0}{\sqrt{a^2+1}}+\frac{n\pi}{4} >\frac{n-2}{2}\pi, & n\geq 5,\\
    \end{array}
    \right.
    \end{equation}
    where $\epsilon(n)$ is as in \eqref{equ-cond-spl}.
For  $\tau=\frac{\pi}{2}$ case,  the convex assumption  can be relaxed to \eqref{equ-cond-spl}.
\end{remark}

Our second main result shows that the symmetry \eqref{equ:symmetry} is  optimal
for $\tau\in(0,\frac{\pi}{2}]$ cases
 in the following sense.
\begin{proposition}\label{prop:no-genearlizedSol} Let $0<A\in\mathtt{Sym}(n)$ satisfy $G_{\tau}(\lambda(A))=C_0$. If there exists a generalized symmetric convex classical solution $u(x)=U(\frac{1}{2}x^TAx)$ of \eqref{equ:translatedequ}.
 Then for $\tau=0$ case,
 $$
 u=e^{C_0}\int_0^{(x^TAx)^{\frac{1}{2}}}(r^n+c_1)^{\frac{1}{n}}dr+c_2
 $$
 for some $c_1\geq 0$ and $c_2\in\mathbb R$.
 For cases $\tau\in(0,\frac{\pi}{4})$, $\tau=\frac{\pi}{4}$,
  $\tau\in(\frac{\pi}{4},\frac{\pi}{2})$ and $\tau=\frac{\pi}{2}$,  either $A$ is a multiplication of identity matrix i.e.,
  $$
  A=\left(
  \frac{2b}{1-\exp(\frac{2b}{n\sqrt{a^2+1}}C_0)}-2b
  \right)I,\quad A=- \frac{\sqrt 2C_0}{2n} I,
  $$
  $$
  A= \tan\left(\frac{b}{n\sqrt{a^2+1}}C_0+\frac{\pi}{4}\right)I,\quad
  A=\tan\left(\frac{C_0}{n}\right)I
  $$
 respectively,
  or $u$ is the quadratic function $\frac{1}{2}x^TAx$ up to some constant $c$.
\end{proposition}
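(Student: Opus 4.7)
The plan is to exploit the generalized symmetry by reducing the PDE to a family of ODE-type identities along the principal axes of $A$. By the orthogonal invariance of $G_\tau$ we may rotate so that $A=\mathrm{diag}(\mu_1,\ldots,\mu_n)$; writing $s=\frac{1}{2}x^{T}Ax$ and $u=U(s)$, a direct computation gives $D^2u(x)=U'(s)\,A+U''(s)(Ax)(Ax)^{T}$. Along the ray $x=te_j$ this is diagonal, with entries $\mu_i p$ for $i\neq j$ and $\mu_j(p+q)$ in the $j$-th slot, where I set $p=U'(s)$, $q=2sU''(s)$ and use $\mu_j t^2=2s$. Substituting into the equation yields, for every $s>0$ and every $j$, the identity
\[
(E_j):\qquad G_\tau\bigl(\mu_1 p,\ldots,\mu_j(p+q),\ldots,\mu_n p\bigr)=C_0.
\]

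For $\tau=0$, using $\prod\mu_i=e^{nC_0}$, the identity $(E_j)$ collapses to $(p+q)p^{n-1}=1$ independently of $j$. Setting $v=p^{n}$ turns this into a linear first-order ODE in $s$ whose general solution is $v=1+Ks^{-n/2}$, with $K\geq 0$ imposed by convexity. Reverting to the variable $r=\sqrt{2s}=(x^{T}Ax)^{1/2}$ and integrating then yields the stated integral representation.

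For $\tau\in(0,\pi/4]$, subtracting $(E_j)-(E_k)$ and simplifying the log or reciprocal terms collapses, after a short algebraic manipulation, to $(\mu_j-\mu_k)\,q=0$. Hence either all $\mu_j$ coincide (so $A=\mu I$ with $\mu$ read off directly from $G_\tau(\mu,\ldots,\mu)=C_0$, producing the stated explicit expression) or $q\equiv 0$. In the latter case $U$ is affine in $s$, so $u=p\cdot\frac{1}{2}x^{T}Ax+c$; strict monotonicity of $p\mapsto G_\tau(p\lambda(A))$ together with $G_\tau(\lambda(A))=C_0$ then forces $p=1$, yielding the quadratic.

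The main obstacle is the arctangent range $\tau\in(\pi/4,\pi/2]$, where the arctangent difference formula applied to $(E_j)-(E_k)$ produces only the weaker constraint
\[
(\mu_j-\mu_k)\,\bigl(1-\mu_j\mu_k\,p(p+q)\bigr)\,q=0.
\]
Thus if not all $\mu_j$ are equal and $q\not\equiv 0$, the relation $p(p+q)=1/(\mu_j\mu_k)$ must hold for every pair of distinct eigenvalues, which rules out three distinct eigenvalues and reduces us to exactly two values $\alpha\neq\beta$ with multiplicities $m$ and $n-m$ (both $\geq 1$). Substituting $\alpha(p+q)=1/(\beta p)$ back into $(E_j)$ and using $\arctan(1/x)=\pi/2-\arctan(x)$ reduces the equation to
\[
(m-1)\arctan(\alpha p)+(n-m-1)\arctan(\beta p)=\mathrm{const};
\]
since $n\geq 3$ at least one coefficient is strictly positive, so strict monotonicity in $p$ forces $p$ to be constant, hence $q=0$, completing the dichotomy.
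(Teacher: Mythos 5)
Your proposal is correct, and the overall reduction---evaluating the PDE along the principal axes to obtain the identities $(E_j)$ and then comparing them---is the same strategy as the paper's. For $\tau\in(0,\tfrac{\pi}{4}]$ your streamlined subtraction $(E_j)-(E_k)\Rightarrow(\mu_j-\mu_k)\,q=0$ is algebraically equivalent to the paper's manipulation of its identity (4.4) / (4.5), just cleaner. The genuinely different part is $\tau\in(\tfrac{\pi}{4},\tfrac{\pi}{2}]$: the paper passes to the polynomial (symmetric-function) form of the equation and invokes a Vandermonde determinant over $n-1$ sample points, which requires a side argument to handle the degenerate cases $\sin C_0=0$ or $\cos C_0=0$; your arctan-difference identity $(E_j)-(E_k)\Rightarrow q(\mu_j-\mu_k)\bigl(1-\mu_j\mu_k\,p(p+q)\bigr)=0$, followed by the elimination of a third distinct eigenvalue and the $\arctan(1/x)=\tfrac{\pi}{2}-\arctan x$ substitution plus strict monotonicity, is more elementary, avoids those degeneracies, and makes the use of $n\ge 3$ transparent (it is exactly what keeps one of $m-1,n-m-1$ positive). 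Likewise, for $\tau=0$ the paper simply cites Jin--Xiong, whereas you solve the ODE $v+\tfrac{2s}{n}v'=1$, $v=(U')^n$, directly; this is a nice self-contained alternative, though note that your solution $v=1+Ks^{-n/2}$ actually yields $u=\int_0^{(x^TAx)^{1/2}}(r^n+c_1)^{1/n}\,dr+c_2$ with no $e^{C_0}$ prefactor (indeed $\det D^2u=\det A\cdot p^{n-1}(p+q)=e^{nC_0}$ already, so the $e^{C_0}$ in the statement appears to be a typo in the paper), so your claim that this ``yields the stated integral representation'' is off by exactly that constant. Minor nit aside, the argument is sound and, for the arctangent range, arguably tighter than the paper's.
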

\begin{remark}
  Proposition \ref{prop:no-genearlizedSol} still holds if equation \eqref{equ:translatedequ} is restricted to an annulus $B_{R_2}\setminus\overline{B_{R_1}}$ with any $0<R_1<R_2$, where $B_r$ denotes the ball centered at origin with radius $r$.
\end{remark}

By \eqref{equ:c-Geq-U0} in Theorem \ref{Thm:classification}, $c\geq u(0)$.
Our final main result shows that $c\geq u(0)$ is also a sufficient condition for a classical solution exists. More explicitly, we prove
the existence of solution of \eqref{equ:SPL}, $\tau\in (0,\frac{\pi}{4})$ with prescribed value at origin and asymptotic behavior at infinity  for all $c\geq u(0)$.

\begin{theorem}\label{Thm:existence}
  For any given $ c\geq u_0, \beta\in \mathbb R^n$ and $0<A\in\mathtt{Sym}(n)$ with $G_{\tau}(\lambda(A))=C_0$ satisfying
  \begin{equation}\label{equ:A-condition-original}
  \delta_0:=
 \sum_{i=1}^n\frac{\min\{\lambda_1(A),\cdots,\lambda_n(A)\}+2b}{\lambda_i(A)+2b}>2,
  \end{equation}
  there exists a unique convex viscosity solution $u\in C^0(\mathbb R^n)$  of
  \begin{equation}\label{equ:Sys-Dirichlet}
  \left\{
  \begin{array}{llllll}
    G_{\tau}(\lambda(D^2u))=C_0, & \text{in }\mathbb R^n\setminus\{0\},\\
    u(0)=u_0, \\
    u(x)= \frac{1}{2}x^TAx+\beta x+c+O(|x|^{2-\delta_0}), & \text{as }|x|\rightarrow\infty,\\
  \end{array}
  \right.
  \end{equation}
  with $\tau\in (0,\frac{\pi}{4})$.
\end{theorem}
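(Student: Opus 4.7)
The plan is to apply Perron's method with the classical quadratic as supersolution and a carefully designed perturbative subsolution whose decay rate is controlled by the exponent $\delta_0$; uniqueness will follow from the standard comparison principle for convex viscosity solutions of $G_{\tau}(\lambda(D^2u))=C_0$, applied on annular regions $B_R\setminus\overline{B_\epsilon}$ and passing to the limit using the prescribed value at origin and the common asymptotic behavior at infinity.

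The supersolution is $\overline{u}(x):=\tfrac{1}{2}x^TAx+\beta x+c$, a classical solution on $\mathbb{R}^n$ with $\overline{u}(0)=c\geq u_0$. For the subsolution the natural candidate is
\begin{equation*}
\underline{u}(x):=\overline{u}(x)-(c-u_0)\bigl(1+x^TAx\bigr)^{(2-\delta_0)/2},
\end{equation*}
which satisfies $\underline{u}(0)=u_0$, $\underline{u}\leq\overline{u}$, and $\underline{u}(x)=\overline{u}(x)-O(|x|^{2-\delta_0})$ as $|x|\to\infty$. To verify the subsolution inequality $G_{\tau}(\lambda(D^2\underline{u}))\geq C_0$ at the linearized level, I first compute that the linearization of $G_{\tau}$ at $A$ is the weighted constant-coefficient operator $L=\sum_i w_i\,\partial^2_{x_ix_i}$ in the $A$-eigenbasis, where $w_i=\frac{\sqrt{a^2+1}}{\mu_i(\mu_i+2b)}$ and $\mu_1,\dots,\mu_n$ are the eigenvalues of $A$. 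A direct computation of $L$ applied to the radial correction, optimized over directions (the worst case being $x$ aligned with an eigenvector of $\mu_{\min}$), reduces the condition to $2\alpha\geq 2-\delta_0$ with $\alpha=(2-\delta_0)/2$, using the identity $(\mu_{\min}+2b)\sum_i\frac{1}{\mu_i+2b}=\delta_0$; the hypothesis $\delta_0>2$ is precisely what makes the admissible range $\alpha<0$ (decay) compatible with $2\alpha\geq 2-\delta_0$, and the value $\alpha=(2-\delta_0)/2$ sharply attains the asymptotic decay rate stated in the theorem. Convexity of $\underline{u}$ follows since $D^2\underline{u}$ is close to $A>0$ for large $|x|$ and strictly exceeds $A$ at the origin.

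Once $\underline{u}$ is constructed, define the Perron envelope
\begin{equation*}
u(x):=\sup\bigl\{v(x):v\in C^0(\mathbb{R}^n)\text{ convex},~v\text{ viscosity subsol.\ of }G_{\tau}=C_0\text{ on }\mathbb{R}^n\setminus\{0\},~v(0)\leq u_0,~v\leq\overline{u}\bigr\}.
\end{equation*}
Since $\underline{u}$ belongs to the admissible class, the envelope is non-trivial. Standard Perron arguments (closure of the subsolution class under suprema and upper semi-continuous regularization, plus the bump lemma at any point $\overline{x}\neq 0$ to produce the supersolution condition) show that $u$ is a viscosity solution of $G_{\tau}=C_0$ on $\mathbb{R}^n\setminus\{0\}$; convexity as a supremum of convex functions together with finiteness $u\leq\overline{u}$ gives $u\in C^0(\mathbb{R}^n)$; and the sandwich $\underline{u}\leq u\leq\overline{u}$ forces $u(0)=u_0$ together with $u(x)=\tfrac{1}{2}x^TAx+\beta x+c+O(|x|^{2-\delta_0})$ as $|x|\to\infty$.

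The main technical obstacle will be verifying the full nonlinear subsolution inequality for $\underline{u}$ globally (not merely at leading order): the quadratic and higher-order corrections to the linearization of $G_{\tau}$ at $A$ must be estimated using the explicit logarithmic form of $G_{\tau}$, and this is exactly the step where $\delta_0>2$ enters sharply. If the boundary exponent $\alpha=(2-\delta_0)/2$ proves too tight to absorb the higher-order terms, one may instead take $\alpha\in((2-\delta_0)/2,0)$ to gain a small margin in the linearized inequality, at the cost of a slightly weaker (but still $O(|x|^{2-\delta_0})$-compatible) asymptotic; a variant ansatz with a different positive-definite quadratic form $B$ in place of $A$ inside the correction, or an additional multiplicative constant, can likewise be used to restore strict subsolution inequality without degrading the decay rate.
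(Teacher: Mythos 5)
Your overall strategy---Perron's method bracketing between the quadratic supersolution $\overline u=\frac{1}{2}x^TAx+\beta x+c$ and a subsolution with error $O(|x|^{2-\delta_0})$, with uniqueness by comparison---matches the paper. Where you diverge is the construction of the subsolution. You posit the explicit power ansatz $\underline u=\overline u-(c-u_0)(1+x^TAx)^{(2-\delta_0)/2}$ and check the subsolution inequality at the linearized level. The paper instead writes $\underline u(x)=U\bigl(\tfrac{1}{2}x^TAx\bigr)$ and obtains the profile by solving the first-order ODE $\psi'=g(\psi)/(s+1)$ for $\psi=U'$ with initial value $\alpha=\psi(0)\geq 1$ (Lemma \ref{Lem:Subsolution}), showing that the resulting constant $\mu(\alpha)$ sweeps $[0,\infty)$; this ODE is engineered so that $U'\geq 1$, $U''\leq 0$, and the \emph{exact} nonlinear inequality $G_\tau(\lambda(D^2\underline u))\geq C_0$ holds by algebraic identities on $\sigma_k$ (Lemma \ref{Lem:subsolution-0}), with no linearization.

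Your construction has a genuine gap, not just an unfinished estimate. First, for $\tau\in(0,\pi/4)$ the operator $G_\tau$ is concave (each $\lambda\mapsto\ln\frac{\lambda}{\lambda+2b}$ has second derivative $-\lambda^{-2}+(\lambda+2b)^{-2}<0$), so writing $w:=(c-u_0)(1+x^TAx)^\alpha$ and $L$ for the linearization of $G_\tau$ at $A$, one only has $G_\tau\bigl(\lambda(A-D^2w)\bigr)\leq C_0-Lw$; your linearized sign computation gives an \emph{upper} bound on $G_\tau(\lambda(D^2\underline u))-C_0$, whereas the subsolution property demands a \emph{lower} bound. This gap is bridgeable at infinity (where $|D^2w|^2=O(\rho^{2\alpha-2})$ decays faster than $|Lw|=O(\rho^{\alpha-2})$ for $\alpha<0$, $\rho:=1+x^TAx$), but not at intermediate radii. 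Second, and more decisively, the ansatz fails for large $c-u_0$: along the ray $x=te_1$ aligned with the smallest eigenvalue $\mu_1$ of $A$ one computes
\begin{equation*}
\partial_{11}w=2(c-u_0)\mu_1\alpha\,\rho^{\alpha-1}\Bigl[2(\alpha-1)\bigl(1-\rho^{-1}\bigr)+1\Bigr],
\end{equation*}
which is negative for $\rho$ near $1$ (so $D^2\underline u>A$ and monotonicity of $G_\tau$ handles the origin) but becomes positive once $\rho>\tfrac{2(1-\alpha)}{1-2\alpha}$, with a positive maximum proportional to $(c-u_0)\mu_1$. For $c-u_0$ large this exceeds $\mu_1$, so $D^2\underline u$ has a negative eigenvalue and $\underline u$ is not even convex, hence certainly not an admissible subsolution. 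Your proposed fixes (shifting $\alpha$ slightly, inserting constants) rescale the transition region but cannot repair the fact that both the concavity error and the loss of convexity scale linearly in $c-u_0$; the paper's ODE profile avoids both because $U''\leq 0$ is built into the construction and the exponent in $U(s)=s+c+O(s^{1-\delta_0/2})$ emerges from an asymptotic stability analysis at the fixed point $\psi=1$ rather than from an a priori power law.
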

\begin{remark} At the end of Section \ref{Sec:Existence}, we prove that the solution found in Theorem \ref{Thm:existence} also satisfies \eqref{equ:Entire} and \eqref{equ:c-Geq-U0}.
  Condition \eqref{equ:A-condition-original} holds if $A$ has two same minimal eigenvalues. For any $C_0<0$, condition \eqref{equ:A-condition-original} is possible to fail. For instance there exists sequences such that $G_{\tau}(\lambda_1,\lambda_2,\lambda_3)=C_0$ but
  $$
  \lambda_1(\epsilon)\rightarrow \frac{2b}{1-\exp(\frac{2b}{\sqrt{a^2+1}}C_0)}-2b\quad\text{and}\quad
  \lambda_2(\epsilon),\lambda_3(\epsilon)\rightarrow+\infty
  $$
  as $\epsilon\rightarrow 0^+$. By a direct computation, the left hand side of \eqref{equ:A-condition-original} tends to $1$ as $\epsilon\rightarrow 0^+$.
  Whether \eqref{equ:A-condition-original} is optimal remains unknown.
\end{remark}
\begin{remark}
 The study on exterior Dirichlet problem by Li-Li \cite{Li-Li-Dirichlet-HessianQuotient} for $\tau=\frac{\pi}{4}$ and Li \cite{Li-Dirichlet-SPL} for $\tau\in(\frac{\pi}{4},\frac{\pi}{2}]$  also works for punctured space after minor modification. From their proof, the results in Theorem \ref{Thm:existence} still holds with  $\delta_0$ changed into
 $$
 \delta_0=\dfrac{\sigma_{n-1}(\lambda(A))\cdot \min\{\lambda_1(A),\cdots,\lambda_n(A)\}}{\sigma_{n}(\lambda(A))}
 $$
 for $\tau=\frac{\pi}{4}$,
 $$
 \delta_0=\frac{\sum_{k=0}^{n} k c_{k}(\frac{n\pi}{4}+\frac{b}{\sqrt{a^2+1}}C_0) \sigma_{k}(\lambda(A))}{\sum_{k=0}^{n} \xi_{k}(\frac{n\pi}{4}+\frac{b}{\sqrt{a^2+1}}C_0, \lambda(A)) c_{k}(\frac{n\pi}{4}+\frac{b}{\sqrt{a^2+1}}C_0) \sigma_{k}(\lambda(A))}
 $$
 for $\tau\in(\frac{\pi}{4},\frac{\pi}{2})$ and
 $$
 \delta_0=\frac{\sum_{k=0}^{n} k c_{k}(C_0) \sigma_{k}(\lambda(A))}{\sum_{k=0}^{n} \xi_{k}(C_0, \lambda(A)) c_{k}(C_0) \sigma_{k}(\lambda(A))}
 $$
 for $\tau=\frac{\pi}{2}$, where
 \begin{equation*}
c_{k}(C):=\left\{\begin{array}{ll}
c_{2 j}(C):=(-1)^{j+1} \sin C & \text { if } k=2 j,~j\in\mathbb N,\\
c_{2 j+1}(C):=(-1)^{j} \cos C & \text { if } k=2 j+1,~j\in\mathbb N,\\
\end{array}\right.
\end{equation*}
\begin{equation*}
\Xi_{k}(\lambda(A), x):=\frac{\sum_{i=1}^{n} \sigma_{k-1 ; i}(\lambda(A)) \lambda_{i}^{2}(A) x_{i}^{2}}{\sigma_{k}(\lambda(A)) \sum_{i=1}^{n} \lambda_{i}(A) x_{i}^{2}}\quad \forall~ x \in \mathbb{R}^{n} \backslash\{0\},
\end{equation*}
\begin{equation*}
\bar{\xi}_{k}(\lambda(A)):=\sup _{x \in \mathbb{R}^{n} \backslash\{0\}}
\Xi_{k}(\lambda(A), x),\quad
\underline{\xi}_{k}(\lambda(A)):=\inf _{x \in \mathbb{R}^{n} \backslash\{0\}} \Xi_{k}(\lambda(A), x),
\end{equation*}
and
\begin{equation*}
\xi_{k}(C, \lambda(A)):=\left\{\begin{array}{ll}
\bar{\xi}_{k}(\lambda(A)) & \text { if } c_{k}(C)>0, \\
\underline{\xi}_{k}(\lambda(A)) & \text { if } c_{k}(C) \leq 0.\\
\end{array}\right.
\end{equation*}
  For $\tau=0$, optimal existence of Dirichlet problem on exterior domain and punctured space have been studied in \cite{Bao-Li-Zhang-CVPDE2015,CL,Jin-Xiong,Li-Lu-2018} etc.
\end{remark}

The paper is organized as follows. In the next three sections, we prove   Theorem \ref{Thm:classification}, Theorem \ref{Thm:existence} and Proposition \ref{prop:no-genearlizedSol} respectively.

\section{Proof of Theorem \ref{Thm:classification}}\label{Sec:Symmetry}

In this section, we prove Theorem \ref{Thm:classification} by maximum principle and removable singularity of viscosity solutions. The following knowledge on convex functions is necessary.
\begin{theorem}[Proposition 2.1 of \cite{Jin-Xiong}]\label{thm:2.1}
  Let $v$ be a locally convex function in $B_1\setminus\{0\}$ with $n\geq 2$. Then $u$ can be uniquely extended to be a convex function in $B_1$.
\end{theorem}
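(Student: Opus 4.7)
The plan is to construct the extension by setting $u=v$ off the origin and $u(0) := \limsup_{x\to 0} v(x)$, then verify convexity on all of $B_1$ and uniqueness. The hypothesis $n\geq 2$ enters crucially because it allows any segment through or ending at the origin to be approximated by nearby segments avoiding the origin.

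First I would promote local convexity to a chord inequality on every segment $[x_1,x_2]\subset B_1\setminus\{0\}$: since $v$ is locally Lipschitz (as a locally convex function in the interior of its domain), it is continuous on $B_1\setminus\{0\}$, and the restriction $t\mapsto v((1-t)x_1+tx_2)$ is a continuous, locally convex function on $[0,1]$, hence convex. Next I would bound $v$ from above near $0$: for $x\in B_1\setminus\{0\}$ with $|x|<1/4$, pick a unit vector $e\perp x$ (possible since $n\geq 2$) and let $s=\sqrt{1/4-|x|^{2}}$; the segment $[x-se,\,x+se]$ has endpoints on $\partial B_{1/2}$, does not pass through $0$ (its closest point to $0$ is $x$ itself, at distance $|x|>0$), and has midpoint $x$, so the chord inequality gives $v(x)\leq \sup_{\partial B_{1/2}}v=:M$. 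Hence $u(0):=\limsup_{x\to 0}v(x)\leq M$ is finite.

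To verify that $u$ is convex on all of $B_1$, I would check the chord inequality for any segment $[x_1,x_2]\subset B_1$ and any $t\in(0,1)$. Segments lying entirely in $B_1\setminus\{0\}$ are handled by the first paragraph. If an endpoint is at the origin, say $x_1=0$, take a sequence $y_k\to 0$ with $y_k\notin \mathbb R x_2$ and $v(y_k)\to u(0)$; the segments $[y_k,x_2]$ avoid $0$, and letting $k\to\infty$ in the chord inequality (using continuity of $v$ at $tx_2\neq 0$) gives $v(tx_2)\leq (1-t)u(0)+tv(x_2)$. If the segment passes through $0$ at some parameter $t^{*}\in(0,1)$, perturb to $[x_1+\delta_k,\,x_2+\delta_k]$ with $\delta_k\to 0$ chosen transverse to the line through $x_1,x_2$ (so that the shifted segment avoids $0$, using $n\geq 2$) and, additionally, so that $v(\delta_k)=v((1-t^{*})(x_1+\delta_k)+t^{*}(x_2+\delta_k))\to u(0)$; the chord inequality on these shifted segments, together with continuity of $v$ at $x_1$ and $x_2$, passes to the limit and yields $u(0)\leq (1-t^{*})v(x_1)+t^{*}v(x_2)$. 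Uniqueness is then automatic: any convex function on the open convex set $B_1$ is continuous at $0$, so any convex extension must equal $\lim_{x\to 0}v(x)$ there, which forces the limit to exist and determines the extension.

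The main obstacle is the simultaneous choice of $\delta_k$ in the last case: it must be transverse to the line $\mathbb R(x_2-x_1)$ to keep the shifted segment away from $0$, yet it must also realize $v(\delta_k)\to u(0)$. I would handle this by first fixing a sequence $y_k\to 0$ along which $v(y_k)\to u(0)$, then, if necessary, replacing each $y_k$ by a nearby $y_k'$ lying outside the reflected segment $-[x_1,x_2]$ within distance $1/k$ of $y_k$; continuity of $v$ at the nonzero points $y_k'$ ensures $v(y_k')\to u(0)$ still, and the segments $[x_1+y_k',\,x_2+y_k']$ miss $0$. The ability to perturb transversally is exactly what would fail in dimension one, which is why the hypothesis $n\geq 2$ is essential.
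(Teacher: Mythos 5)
Your proposal is essentially correct, but note that the paper itself contains no proof of this statement: it is quoted verbatim as Proposition 2.1 of \cite{Jin-Xiong}, so your argument is in effect a self-contained substitute for the cited result rather than a variant of an in-paper proof. Your route --- chord inequalities on segments avoiding the origin, an upper bound for $v$ near $0$ via chords with endpoints on $\partial B_{1/2}$, the definition $u(0):=\limsup_{x\to 0}v$, recovery of the chord inequalities involving the origin by transverse perturbations (the only place $n\ge 2$ is used), and uniqueness from continuity of finite convex functions on open sets --- is a natural elementary proof, and the perturbation bookkeeping (choosing $y_k'$ off the reflected segment $-[x_1,x_2]$ while keeping $v(y_k')\to u(0)$) is handled correctly. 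Two points should be tightened. First, for a segment $[x_1,x_2]$ containing $0$ in its interior at parameter $t^{*}$ you verify the chord inequality only at $t=t^{*}$; for $t\neq t^{*}$ the inequality $v((1-t)x_1+tx_2)\le(1-t)v(x_1)+t\,v(x_2)$ is not covered by your first case (the segment meets $0$), but it does follow by combining your endpoint-at-origin inequality on $[x_1,0]$ (or $[0,x_2]$) with $u(0)\le(1-t^{*})v(x_1)+t^{*}v(x_2)$: writing $(1-t)x_1+tx_2=\lambda x_1+(1-\lambda)\cdot 0$ with $\lambda=(t^{*}-t)/t^{*}$ and substituting gives exactly the claimed coefficients $1-t$ and $t$, so add this one-line assembly. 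Second, you only show $\limsup_{x\to 0}v\le \sup_{\partial B_{1/2}}v<\infty$, not that it is $>-\infty$; a radial chord argument (bound $v(x)$ from below using the chord inequality on a segment from $x$ through $\tfrac14 x/|x|$ to $\tfrac12 x/|x|$, which misses $0$, together with boundedness of $v$ on $\partial B_{1/4}\cup\partial B_{1/2}$) gives a uniform lower bound near $0$, so $u(0)$ is a genuine real number and the extension is finite-valued. With these two small patches the proof is complete.
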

\begin{theorem}[Theorem 1.1 of \cite{CLN-Remarks-III}]\label{thm:2.2}
  Let   $\Omega\subset\mathbb R^n$ be a domain, $\overline x\in\Omega$ and  $u\in C^0(\Omega)$ be a  viscosity solution of
  $$
  F(D^2u)\geq f(x)\quad\text{in }\Omega\setminus\{\overline x\},
  $$
  where $F\in C^0(\mathtt{Sym}(n))$ is an elliptic operator and $f\in C^0(\Omega)$. Suppose that $u$ is upperconical at $\overline x$, i.e.,
  \begin{equation}\label{equ:lowerconical}
    \sup_{x\in\Omega}\left(
    (u+\eta)(x)-(u+\eta)(\overline x)
    +\epsilon|x-\overline x|
    \right)>0
  \end{equation}
  for any $\eta\in C^{\infty}(\Omega)$ and $\epsilon>0$. Then
  $$
  F(D^2u)\geq f(x)\quad\text{in }\Omega
  $$
  in viscosity sense.
\end{theorem}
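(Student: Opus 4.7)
The plan has four stages, matching the four conclusions of the theorem. First, apply Theorem \ref{thm:2.1} to uniquely extend the locally convex $u$ to a convex function in $C^0(\mathbb R^n)$. To promote this to a global viscosity subsolution \eqref{equ:Entire}, I apply Theorem \ref{thm:2.2} at $\overline x = 0$ with $F(M) = G_\tau(\lambda(M))$ and $f \equiv C_0$, which requires checking the upperconical condition there. Since the extended $u$ is convex, $\partial u(0)$ is nonempty; pick any subgradient $\xi$ and, for given $\eta \in C^\infty$ and $\epsilon > 0$, choose a unit vector $v$ with $(\xi + D\eta(0))\cdot v \geq 0$. Then for $x = tv$ with $t > 0$ small, $u(x) + \eta(x) - u(0) - \eta(0) + \epsilon|x| \geq \epsilon t + O(t^2) > 0$, and \eqref{equ:lowerconical} holds.

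Second, to identify $(A, \beta, c)$, I invoke the exterior-domain asymptotic expansion results in \cite{bao-liu2020asymptotic, bao-liu2021asymptotic} (or \cite{CL, Jin-Xiong} for $\tau = 0$) applied to $u$ restricted to $\{|x| \geq 1\}$. Under the convexity assumption these yield $A \in \mathtt{Sym}(n)$ with $G_\tau(\lambda(A)) = C_0$, $\beta \in \mathbb R^n$, $c \in \mathbb R$, and $\delta > 0$ such that
\[
u(x) - \bigl(\tfrac{1}{2} x^T A x + \beta x + c\bigr) = O(|x|^{2-\delta}) \quad \text{as } |x| \to \infty.
\]

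Third, for the pointwise bound \eqref{equ:c-Geq-U0}, write $v(x) = \frac{1}{2} x^T A x + \beta x + c$ and $w = u - v$. The operator $G_\tau$ is concave in $\lambda$ on the admissible range, so integrating its derivative along the segment joining $D^2 v$ and $D^2 u$ produces elliptic coefficients $a^{ij}(x) = \int_0^1 (DG_\tau \circ \lambda)(tD^2 u + (1-t)D^2 v)\, dt$ for which $a^{ij} \partial_{ij} w \geq G_\tau(\lambda(D^2 u)) - G_\tau(\lambda(D^2 v)) \geq 0$ on $\mathbb R^n \setminus \{0\}$. Since $w \to 0$ at infinity by the asymptotic expansion and $w$ is continuous (hence bounded) on bounded sets, the classical maximum principle on the annuli $B_R \setminus \overline{B_r}$ with $R \to \infty$ and $r \to 0$ gives $w \leq 0$ everywhere, proving \eqref{equ:c-Geq-U0}.

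Fourth, for the symmetry \eqref{equ:symmetry}, diagonalize $A = O\Lambda O^T$ and set $\bar u(y) := u(Oy) - \beta\cdot(Oy)$, so $\bar u$ satisfies \eqref{equ:translatedequ} on $\mathbb R^n \setminus \{0\}$ with the asymptote $\tfrac{1}{2}\sum_i \lambda_i(A)\, y_i^2 + c$, which is invariant under each coordinate reflection $R_i : y_i \mapsto -y_i$. Then $\bar u_i(y) := \bar u(R_i y)$ is another classical convex solution on $\mathbb R^n \setminus \{0\}$, continuous at the origin with $\bar u_i(0) = u(0)$, and sharing the same asymptote as $\bar u$. Reapplying the linearization–plus–maximum-principle argument of the previous paragraph to the pair $(\bar u, \bar u_i)$ in both directions (since both are classical solutions of the same equation, not merely sub/supersolutions) yields $\bar u \equiv \bar u_i$; iterating over $i = 1, \dots, n$ gives \eqref{equ:symmetry}. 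The hardest step is the third one: ensuring that the linearized operator $a^{ij}\partial_{ij}$ is uniformly elliptic with locally bounded coefficients on the unbounded annular domain, so that a standard Phragmén–Lindelöf/maximum-principle argument applies; this boils down to using convexity and the equation to control $\lambda(D^2 u)$ on compact sets away from $0$ together with the asymptotic expansion to control it at infinity.
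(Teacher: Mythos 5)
Your proposal does not prove the quoted statement. The statement is Theorem \ref{thm:2.2}, the removable isolated-singularity result for viscosity inequalities: if $F(D^2u)\geq f$ holds in the viscosity sense in $\Omega\setminus\{\overline x\}$ for a general continuous elliptic $F$ and $f\in C^0(\Omega)$, and $u\in C^0(\Omega)$ is upperconical at $\overline x$ in the sense of \eqref{equ:lowerconical}, then the inequality holds across $\overline x$. In the paper this is an imported result (Theorem 1.1 of \cite{CLN-Remarks-III}), so the benchmark ``proof'' is the citation; a self-contained argument would have to take an arbitrary admissible $\psi\in C^2$ touching $u$ from above at $\overline x$ and show $F(D^2\psi(\overline x))\geq f(\overline x)$, typically by perturbing $\psi$ with a cone term $\epsilon|x-\overline x|$, using the upperconical condition to produce touching points in $\Omega\setminus\{\overline x\}$ where the hypothesis applies, and then letting $\epsilon\to 0$ using the continuity of $F$, $f$ and $D^2\psi$. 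Nothing of this kind appears in your write-up.

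What you wrote instead is an outline of the proof of Theorem \ref{Thm:classification}: extension via Theorem \ref{thm:2.1}, promotion to a global viscosity subsolution by invoking Theorem \ref{thm:2.2} itself, the asymptotic expansion from \cite{bao-liu2020asymptotic}, comparison with the quadratic asymptote by a linearized maximum principle, and the reflection argument for the symmetry. Invoking Theorem \ref{thm:2.2} as a black box obviously cannot serve as a proof of Theorem \ref{thm:2.2}; the only piece of your text that touches its content is the verification that a convex function satisfies \eqref{equ:lowerconical}, and that is the content of the remark following the theorem in the paper (where it is checked at every $\overline x$, for general convex $u$, via the midpoint inequality), not of the theorem itself. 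To meet the task you would need to supply the Caffarelli--Li--Nirenberg argument, or an alternative one, for the general statement: arbitrary continuous elliptic $F$, arbitrary $f\in C^0$, and an arbitrary $u\in C^0$ assumed only to be a viscosity subsolution off the point and upperconical at it.
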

\begin{remark}
  If $u$ is a convex function in $\Omega$, then for all $\overline x\in\Omega$, $u$ is upperconical at $\overline x$. By contradiction, we suppose there exist $\epsilon\in (0,1)$ and $\eta\in C^{\infty}(\Omega)$ such that
  $$
  (u+\eta)(x)-(u+\eta)(\overline x)+\epsilon|x-\overline x|\leq 0\quad\text{in }\Omega.
  $$
  By Taylor expansion of $\eta$ near $\overline x$, there exists $0<\delta<1$ such that
  $$
  u(x)\leq u(\overline x)-\nabla\eta(\overline x)\cdot (x-\overline x)-\frac{\epsilon}{2}|x-\overline x|,\quad\forall~ 0<|x-\overline x|<\delta.
  $$
  Since $u$ is convex, take any $e\in\partial B_1$ and we have
  $$
  2u(\overline x)\leq u(\overline x+\frac{\delta}{2}e)+
  u(\overline x-\frac{\delta}{2}e)\leq 2u(\overline x)-\frac{\epsilon\delta}{4}<2u(\overline x),
  $$
  which is a contradiction.

  For more examples and discussions on
  condition \eqref{equ:lowerconical}, we refer to Remark 1.2 of \cite{CLN-Remarks-III}.
\end{remark}

Let $u\in  C^2(\mathbb R^n\setminus\{0\})$ be a classical  solution of \eqref{equ:translatedequ} as in Theorem \ref{Thm:classification} or Remark \ref{Rem:nonConvex}. By the asymptotic behavior result (see for instance Theorem 1.3 of \cite{bao-liu2020asymptotic}), there exists $c\in\mathbb R, \beta\in\mathbb R^n$ and  $A\in\mathtt{Sym}(n)$ with $G_{\tau}(\lambda(A))=C_0$ such that
  \begin{equation}\label{equ:asymptotics}
 \limsup_{|x|\rightarrow+\infty}|x|^{2+k-n}\left|D^k \left(u(x)-\left(
  \frac{1}{2}x^TAx+\beta  x+c
  \right)\right)\right|<\infty
  \end{equation}
for all $k\in\mathbb N$.

Step 1. we prove \eqref{equ:Entire} and \eqref{equ:c-Geq-U0}.
The proof is separated into five cases according to $\tau$.

If $u$ is a classical convex solution as in Theorem \ref{Thm:classification},  \eqref{equ:Entire} follows immediately from Theorems \ref{thm:2.1} and \ref{thm:2.2}. By \eqref{equ:asymptotics}, we have
$$
u(x)\rightarrow  \frac{1}{2}x^TAx+\beta x+c =:V(x)\quad\text{as }|x|\rightarrow\infty.
$$
By a direct computation, $V(x)$ is a quadratic solution of $G_{\tau}(\lambda(D^2u))=C_0$ on entire $\mathbb R^n$. By maximum principle  (see for instance Theorem 17.1 of \cite{Book-GT}),
$$
u(x)\leq V(x)=\frac{1}{2}x^TAx+\beta x+c\quad\text{in }\mathbb R^n.
$$

For $\tau\in(\frac{\pi}{4},\frac{\pi}{2})$ case in Remark \ref{Rem:nonConvex}, the desired result follows from $\tau=\frac{\pi}{2}$ case, which will be proved immediately. For $\tau=\frac{\pi}{2}$ case of Remark \ref{Rem:nonConvex}, we prove that there exists $K_0\geq 0$ such that
$$
D^2u>-K_0I.
$$
For the first subcase of \eqref{equ-cond-spl}, the result is immediate.
For $C_0>\frac{n-2}{2}\pi$, then for all $i=1,2,\cdots,n$,
$$
\arctan\lambda_i(D^2u)+\frac{n-1}{2}\pi>\sum_{j=1}^n\arctan\lambda_j(D^2u)=C_0.
$$
Thus we may take $K_0:=\min\{\tan(C_0-\frac{n-1}{2}\pi),0\}$.
Let $$
v(x):=u(x)+\frac{K_0}{2}|x|^2,
$$
then
$$
D^2v=D^2u+K_0I>0\quad\text{in }\mathbb R^n\setminus\{0\}
$$
and $v$ satisfies
$$
\sum_{i=1}^n\arctan(\lambda_i(D^2v)-K_0)=C_0\quad\text{in }\mathbb R^n\setminus\{0\}.
$$
By a direct computation,
$$
\frac{\partial
}{\partial\lambda_i}
 \arctan(\lambda_i-K_0)=\frac{1}{1+(\lambda_i-K_0)^2}>0.
$$
Together with asymptotics \eqref{equ:asymptotics}, $D^2v$ is bounded from above. Hence the differential operator is elliptic  (see for instance \cite{CNS-DirichletIII}) and the desired results follow from Theorems \ref{thm:2.1} and \ref{thm:2.2}.

Step 2.  we prove the symmetric property \eqref{equ:symmetry} for diagonal case  i.e., $A=\mathtt{diag}(a_1,a_2,\cdots,a_n)$. We only need to prove for $\beta=0$ case, otherwise replace $u$ by $u-\beta x$.  Similar to the proof of Theorem 1.1 in \cite{Jin-Xiong}, we introduce the following maximum principle.
\begin{lemma}\label{Lem:maximumPrinciple}
  Let $w$ be a classical solution of
  $$
  a_{ij}D_{ij}w=0\quad\text{in }\mathbb R^n\setminus\{0\},
  $$
  where $a_{ij}(x)$ is a positive definite matrix. Suppose
  $$
  w(0)=0\quad\text{and}\quad\lim_{|x|\rightarrow\infty}w(x)=0,
  $$
  then $w\equiv 0$.
\end{lemma}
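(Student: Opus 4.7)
The plan is to apply the classical weak maximum principle on annular regions, exploiting the two prescribed vanishing conditions to squeeze $w$ between arbitrarily small bounds.

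Fix an arbitrary $\varepsilon>0$. Since $w(0)=0$ and $w$ is continuous at the origin (as a classical solution whose value at $0$ is specified), there exists $r=r(\varepsilon)>0$ such that $|w(x)|<\varepsilon$ on $\partial B_r$. Since $\lim_{|x|\to\infty}w(x)=0$, there exists $R=R(\varepsilon)>r$ such that $|w(x)|<\varepsilon$ on $\partial B_R$. On the bounded annular domain $\Omega_{r,R}:=B_R\setminus\overline{B_r}$, the function $w$ is a classical solution of the linear equation $a_{ij}D_{ij}w=0$ with positive definite (hence elliptic) coefficient matrix and no zeroth order term. The classical weak maximum principle (see for instance Theorem 3.1 of \cite{Book-GT}) applied to $\pm w$ gives
\begin{equation*}
\max_{\overline{\Omega_{r,R}}}|w|\leq \max_{\partial\Omega_{r,R}}|w|<\varepsilon.
\end{equation*}

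Combining this with the boundary estimates, $|w(x)|<\varepsilon$ holds on $\overline{B_R}\setminus\{0\}$, and trivially $|w(x)|<\varepsilon$ on $\mathbb{R}^n\setminus B_R$ by the choice of $R$. Hence $|w|<\varepsilon$ on all of $\mathbb{R}^n\setminus\{0\}$. Since $\varepsilon>0$ was arbitrary, we conclude $w\equiv 0$.

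I do not anticipate a serious obstacle here: the only subtle point is the behavior at the isolated singularity, but the maximum principle is applied only on closed annuli bounded away from $0$, so no regularity at the origin beyond the prescribed value $w(0)=0$ is needed. The argument is a direct analogue of the uniqueness step in Jin--Xiong \cite{Jin-Xiong}.
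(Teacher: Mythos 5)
Your proof is correct and is essentially the paper's argument in a slightly different guise: the paper argues by contradiction and applies the weak maximum principle on the level set $\{w>\tfrac{1}{2}w(x_1)\}$, while you squeeze $|w|$ on annuli $B_R\setminus\overline{B_r}$; both rest on the same two vanishing conditions plus the weak maximum principle on a bounded subdomain that avoids the puncture and infinity. One micro-gap worth tightening: the annulus estimate only covers $\overline{B_R}\setminus B_r$, so to conclude $|w|<\varepsilon$ on all of $\overline{B_R}\setminus\{0\}$ you should observe that continuity of $w$ at $0$ in fact gives $|w|<\varepsilon$ on the whole ball $\overline{B_r}$ for $r$ small (not merely on $\partial B_r$), or equivalently that for any fixed $x_0\neq 0$ one may choose $r<|x_0|$.
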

\begin{proof}
  By contradiction, we may suppose there exists $x_0\in\mathbb R^n\setminus\{0\}$ such that $u(x_0)>0$. Then by continuity and boundary conditions, there exists $x_1\in\mathbb R^n\setminus\{0\} $ such that $u(x_1)=\max_{\mathbb R^n\setminus\{0\}}u>0$. Applying standard maximum principle in level set $\{x\in\mathbb R^n\setminus \{0\}:u>\frac{1}{2}u(x_1)\}$, we have a contradiction $\frac{1}{2}u(x_1)=u(x_1)$.
\end{proof}
For any $i=1,2,\cdots,n$, let
$$
U(x):=u(\widetilde x),\quad\text{where}\quad \widetilde x=(x_1,\cdots,x_{i-1},-x_i,x_{i+1},\cdots,x_n).
$$
By a direct computation,   $u,U$ satisfy
$$
\left\{
\begin{array}{llll}
  G_{\tau}(\lambda(D^2u))=G_{\tau}(\lambda(D^2U))=C_0,& \text{in }\mathbb R^n\setminus\{0\},\\
  u(x)=U(x), & \text{at }x=0,\\
  u(x),U(x)\rightarrow\left( \frac{1}{2}x^TAx+c\right),& \text{as }|x|\rightarrow\infty.
\end{array}
\right.
$$
By Newton-Leibnitz formula between $u,U$ and applying maximum principle as in Lemma \ref{Lem:maximumPrinciple}, we have $u(x)=U(x)=u(\widetilde x)$. Hence $u(x)$ is hyperplane symmetry with respect to all $n$ coordinate hyperplane.

Step 3. we prove \eqref{equ:symmetry} for general $A$.  Since $A$ is a symmetric matrix, by eigen-decomposition there exists an orthogonal matrix $O$ such that
\begin{equation}\label{equ:eigendecomposition-2}
A=O^T\Lambda O,\quad\text{where}\quad \Lambda=\mathtt{diag}(a_1,\cdots,a_n).
\end{equation}
Let $v(x):=u(O^{-1}x)$, then $D^2v(x)=OD^2u(O^{-1}x)O^{-1}$ has the same eigenvalues as $D^2u(O^{-1}x)$. Hence
$$
G_{\tau}(\lambda(D^2v))=C_0\quad\text{in }\mathbb R^n\setminus\{0\}
$$
and
$$
v(0)=u(0),\quad v(x)\rightarrow \frac{1}{2}x^TOAO^{-1}x+c=\frac{1}{2}x^T\Lambda x+c
$$
as $|x|\rightarrow\infty$. By the previous step, we have
$$
v(\widetilde x)=v(x)\quad\forall~ \{\widetilde x\in\mathbb R^n~:~
\widetilde x_i=\pm x_i,\quad\forall~i=1,2,\cdots,n
\}.
$$
Thus
$$
u(\widetilde x)=u(x)\quad\forall~\{\widetilde x\in\mathbb R^n~:~ (O^T\widetilde x)_i=\pm (O^Tx)_i,\quad\forall~i=1,2,\cdots,n\}
$$
and \eqref{equ:symmetry} follows immediately.

\section{Proof of Theorem \ref{Thm:existence}}\label{Sec:Existence}

In this section, we consider the existence result of Dirichlet problem for
$\tau\in(0,\frac{\pi}{4})$ with prescribed asymptotic behavior and value at origin. By similar translation as \eqref{equ:translation} and eigen-decomposition as \eqref{equ:eigendecomposition-2}, we may assume $\beta=0$ and consider only  the following Dirichlet problem.
\begin{equation}\label{equ:Translated}
\left\{
\begin{array}{lllll}
  G_{\tau}(\lambda(D^2u))=C_0,& \text{in }\mathbb R^n\setminus\{0\},\\
  u(0)=u_0,\\
  u(x)\rightarrow\left(\frac{1}{2}x^TAx+c\right),& \text{as }|x|\rightarrow\infty,\\
\end{array}
\right.
\end{equation}
where
$$
0<A=\mathtt{diag}(a_1,\cdots,a_n)\quad\text{satisfies}\quad G_{\tau}(a_1,\cdots,a_n)= C_0<0.
$$
We will write the vector $(a_1,\cdots,a_n)$ simply as $a$ when no confusion can arise.

Similar to the strategy as in \cite{Bao-Li-Li-2014,Li-Li-Dirichlet-HessianQuotient,Li-Dirichlet-SPL},
we seek for generalized symmetric subsolution of form
\begin{equation}\label{equ:def-s}
u(x)=U(s),\quad\text{where}\quad
s(x):=\frac{1}{2}\sum_{i=1}^na_ix_i^2,
\end{equation} and apply Perron's method. By a direct computation,
$$
D_iu(x)=U'(s)a_ix_i,\quad D_{ij}u(x)=U'(s)a_i\delta_{ij}+U''(s)a_ia_jx_ix_j.
$$
By Sylvester's determinant theorem, we have
$$
\begin{array}{lllll}
&\sigma_n(\lambda(D^2u)) \\
=& \displaystyle \det \left(
\left(\begin{array}{lll}
  U'a_1\\
  & \ddots\\
  &&U'a_n\\
\end{array}\right)
+U''(s)\left(
\begin{array}{c}
  a_1x_1\\
  \vdots\\
  a_nx_n\\
\end{array}
\right)\cdot (a_1x_1,\cdots,a_nx_n)
\right)\\
=&\displaystyle
\det \left(\begin{array}{lll}
  U'a_1\\
  & \ddots\\
  &&U'a_n\\
\end{array}\right)\\
&\displaystyle \cdot \left(
1+U'' (a_1x_1,\cdots,a_nx_n)
\left(\begin{array}{lll}
  (U'a_1)^{-1}\\
  & \ddots\\
  &&(U'a_n)^{-1}\\
\end{array}\right)
\left(
\begin{array}{c}
  a_1x_1\\
  \vdots\\
  a_nx_n\\
\end{array}
\right)
\right)\\
=&\displaystyle (U')^n\sigma_n(a)+U''(U')^{n-1}\sigma_n(a)\sum_{i=1}^n
a_ix_i^2,
\end{array}
$$
i.e.,
\begin{equation}\label{equ:subsolu-1}
\sigma_n(\lambda(D^2u))=(U')^n\sigma_n(a)+2sU''(U')^{n-1}\sigma_n(a).
\end{equation}
Similarly,
$$
D_{ij}(u+b|x|^2)=(U'a_i+2b)\delta_{ij}+U''a_ia_jx_ix_j
$$
and
$$
\begin{array}{lllll}
&\sigma_n(\lambda(D^2(u+b|x|^2))) \\
=& \displaystyle \det \left(
\left(\begin{array}{lll}
  U'a_1+2b\\
  & \ddots\\
  &&U'a_n+2b\\
\end{array}\right)
+U''(s)\left(
\begin{array}{c}
  a_1x_1\\
  \vdots\\
  a_nx_n\\
\end{array}
\right)\cdot (a_1x_1,\cdots,a_nx_n)
\right)\\
=&\displaystyle
\det \left(\begin{array}{lll}
  U'a_1+2b\\
  & \ddots\\
  &&U'a_n+2b\\
\end{array}\right)\\
&\displaystyle\cdot \left(
1+U'' (a_1x_1,\cdots,a_nx_n)
\left(\begin{array}{lll}
  (U'a_1+2b)^{-1}\\
  & \ddots\\
  &&(U'a_n+2b)^{-1}\\
\end{array}\right)
\left(
\begin{array}{c}
  a_1x_1\\
  \vdots\\
  a_nx_n\\
\end{array}
\right)
\right)\\
=&\displaystyle
\prod_{i=1}^n(U'a_i+2b)
+U''\sum_{i=1}^n a_i^2x_i^2\prod_{j=1,\cdots,n\atop j\not=i}(U'a_j+2b)
\end{array}
$$
i.e.,
\begin{equation}\label{equ:subsolu-2}
\sigma_n(\lambda(D^2(u+b|x|^2)))=\sigma_n(a)\prod_{i=1}^n(U'+\frac{2b}{a_i})
+U''\sigma_n(a)\sum_{i=1}^na_ix_i^2\prod_{j=1,\cdots,n\atop j\not=i}(U'+\frac{2b}{a_j}).
\end{equation}
By \eqref{equ:subsolu-1} and \eqref{equ:subsolu-2}, $u(x)=U(s)$ is a subsolution of \eqref{equ:translatedequ} as long as
$$
\dfrac{\displaystyle(U')^n\sigma_n(a)
+2sU''(U')^{n-1}\sigma_n(a)}{
\displaystyle \prod_{i=1}^n(U'a_i+2b)
+U''\sigma_n(a)\sum_{i=1}^n\left(a_ix_i^2\prod_{j=1,\cdots,n\atop j\not=i}(U'+\frac{2b}{a_j})\right)
}\geq c_0,
$$
i.e.,
\begin{equation}\label{equ:denominator}
\dfrac{\displaystyle(U')^n
+2sU''(U')^{n-1}}{
\displaystyle \prod_{i=1}^n(U'+\frac{2b}{a_i})
+U''\sum_{i=1}^n\left(a_ix_i^2\prod_{j=1,\cdots,n\atop j\not=i}(U'+\frac{2b}{a_j})\right)
}\geq c_0,
\end{equation}
where $c_0=\exp(\frac{2b}{\sqrt{a^2+1}}C_0)\in (0,1)$ satisfies
\begin{equation}\label{equ:def-c_0}
c_0=\prod_{i=1}^n\frac{a_i}{a_i+2b},\quad\text{i.e.,}\quad
1=c_0\cdot\prod_{i=1}^n(1+\frac{2b}{a_i}).
\end{equation}
Hereinafter, we may assume without loss of generality that
$$
0<a_1\leq a_2\leq\cdots\leq a_n.
$$
\begin{lemma}\label{Lem:subsolution-0}
Let $U$ be a classical solution of
  \begin{equation}\label{equ:ODE}
    \left\{
    \begin{array}{llllll}
      \displaystyle (U')^n-c_0\prod_{i=1}^n(U'+\frac{2b}{a_i})\\
      \displaystyle +2(s+1)U''\left((U')^{n-1}
      -c_0\prod_{i=2}^n(U'+\frac{2b}{a_i})
      \right)=0,&\text{in }s>0,\\
      U'\geq 1,~U''\leq 0,& \text{in }s>0.
    \end{array}
    \right.
  \end{equation}
Then $u(x)=U(s)$ is convex and satisfies $G_{\tau}(\lambda(D^2u))\geq  C_0$ in $\mathbb R^n\setminus\{0\}$.
\end{lemma}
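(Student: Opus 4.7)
The plan is to reduce both assertions — convexity of $u(x)=U(s)$ and the subsolution inequality $G_\tau(\lambda(D^2u))\ge C_0$ — to scalar algebraic inequalities in $U'$, $U''$, $s$ and the $a_i$, which can then be checked using the ODE \eqref{equ:ODE}, the sign constraints $U'\ge1$ and $U''\le0$, and the ordering $a_1\le\cdots\le a_n$.

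For convexity, starting from $D_{ij}u = U'a_i\delta_{ij} + U''a_ia_jx_ix_j$ and applying Cauchy–Schwarz in the form $(\sum_i a_ix_i\xi_i)^2 \le 2s\sum_i a_i\xi_i^2$ together with $U''\le0$, I get
\[
\xi^TD^2u\,\xi \ge (U' + 2sU'')\sum_i a_i\xi_i^2,
\]
so convexity reduces to the scalar inequality $U' + 2sU''\ge0$. Solving \eqref{equ:ODE} for $2(s+1)U''$ and rearranging, this amounts to
\[
c_0\prod_{i=2}^n\!\Bigl(U'+\tfrac{2b}{a_i}\Bigr)\Bigl(U'-\tfrac{2sb}{a_1}\Bigr) \le (U')^n,
\]
which I verify separately in the cases $U'-2sb/a_1\le0$ (trivial) and $U'-2sb/a_1>0$ (using \eqref{equ:def-c_0} to rewrite the left side as $\tfrac{a_1}{a_1+2b}\prod_{i=2}^n\tfrac{a_iU'+2b}{a_i+2b}\cdot(U'-2sb/a_1)$ and the pointwise bound $(a_iU'+2b)/(a_i+2b)\le U'$ that follows from $U'\ge1$).

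For the subsolution inequality I first observe, using convexity, that $G_\tau(\lambda(D^2u))\ge C_0$ is equivalent (after exponentiating the defining sum-of-logs) to $\sigma_n(\lambda(D^2u))\ge c_0\,\sigma_n(\lambda(D^2u+2bI))$. Substituting \eqref{equ:subsolu-1}–\eqref{equ:subsolu-2}, dividing by $\sigma_n(a)>0$, and using \eqref{equ:ODE} to eliminate $(U')^n-c_0\prod_i(U'+2b/a_i)$, the desired inequality collapses to $-U''\cdot B\ge0$, where
\[
B := 2(U')^{n-1} - 2(s+1)c_0\!\prod_{i=2}^n\!\Bigl(U'+\tfrac{2b}{a_i}\Bigr) + c_0\!\sum_i a_ix_i^2\!\prod_{j\ne i}\!\Bigl(U'+\tfrac{2b}{a_j}\Bigr).
\]
Since $U''\le0$, it suffices to prove $B\ge0$. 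Writing $2(s+1)=\sum_i a_ix_i^2+2$ and pairing, for each $i\ge2$, the full product $\prod_{j\ne i}$ with the $a_1$-omitting product $\prod_{j=2}^n$, the $i=1$ contribution cancels and $B$ splits as
\[
2\!\left[(U')^{n-1} - c_0\!\prod_{i=2}^n\!\bigl(U'+\tfrac{2b}{a_i}\bigr)\right] + c_0\!\sum_{i=2}^n a_ix_i^2\cdot 2b\!\left(\tfrac{1}{a_1}-\tfrac{1}{a_i}\right)\!\!\prod_{\substack{j\ge 2\\ j\ne i}}\!\bigl(U'+\tfrac{2b}{a_j}\bigr),
\]
and both pieces are non-negative: the first by the same $(a_iU'+2b)/(a_i+2b)\le U'$ bound that gave the convexity reduction, and the second by $a_1=\min_ia_i$.

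The main obstacle is the algebraic bookkeeping in this second step: the coefficient $(s+1)$ (rather than $s$) appearing in the ODE \eqref{equ:ODE} is calibrated precisely so that after substitution the residual $B$ separates into a sum of manifestly non-negative pieces, and exposing this cancellation requires carefully matching the two competing product structures $\prod_{j\ne i}$ and $\prod_{j=2}^n$ indexed by $i$. Once that rearrangement is in place, both convexity and the subsolution property follow from the normalization \eqref{equ:def-c_0}.
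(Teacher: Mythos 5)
Your argument is correct and covers the same mathematical content as the paper's proof, but the packaging is genuinely different and worth recording. For convexity, the paper proves positivity of every elementary symmetric polynomial $\sigma_k(\lambda(D^2u))$ using the identity $\sigma_k(a)=\sigma_{k;i}(a)+a_i\sigma_{k-1;i}(a)$ together with $U'+2sU''>0$; you instead apply Cauchy--Schwarz in the form $(\sum_i a_ix_i\xi_i)^2\le 2s\sum_i a_i\xi_i^2$ to the rank-one correction, which lands on the same scalar obstruction $U'+2sU''\ge 0$ without the $\sigma_k$ bookkeeping. For the subsolution inequality, the paper keeps the term $(U')^n-c_0\prod_i(U'+2b/a_i)$ in play and closes via a chain of three inequalities (first $\sum_i a_ix_i^2\prod_{j\ne i}\ge 2s\prod_{i\ge2}$, then $U''\le 0$ to trade $2s$ for $2(s+1)$, then the ODE as an identity), whereas you substitute the ODE at the outset and split the residual $B$ into two manifestly nonnegative pieces, which makes visible why the ODE was set up with the coefficient $s+1$ rather than $s$. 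Both routes ultimately rest on the same two facts — $(U')^{n-1}-c_0\prod_{i\ge2}(U'+2b/a_i)>0$, a consequence of the normalization $c_0\prod_i(1+2b/a_i)=1$, and the ordering $a_1\le\cdots\le a_n$. One small precision point: the equivalence $G_\tau(\lambda)\ge C_0\Longleftrightarrow \sigma_n(\lambda)\ge c_0\sigma_n(\lambda+2b)$ requires $\lambda_i>0$, hence \emph{strict} positive-definiteness of $D^2u$, i.e.\ $U'+2sU''>0$ rather than merely $\ge 0$. Your case analysis in fact yields the strict inequality (the bound $\frac{a_1}{a_1+2b}(U'-\tfrac{2sb}{a_1})<U'$ is strict, and the $U'-\tfrac{2sb}{a_1}\le 0$ case gives $\text{LHS}\le 0<(U')^n$), so nothing is missing, but the conclusion should be stated as strict.
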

\begin{proof}
By the assumptions on $a_i$, for all $U'\geq 1$,
$$
\sum_{i=1}^n\left(a_ix_i^2\prod_{j=1,\cdots,n\atop j\not=i}(U'+\frac{2b}{a_j})\right)\leq
\sum_{i=1}^n\left(a_ix_i^2\prod_{j=1}^{n-1}(U'+\frac{2b}{a_j})\right)=2s\prod_{j=1}^{n-1}(U'+\frac{2b}{a_j}).
$$
We claim that the denominator part of \eqref{equ:denominator} is positive. By $U'\geq 1,~U''\leq 0$, we only need to prove
\begin{equation}\label{equ:temp-3}
U'+2sU''> 0,
\end{equation}
then
$$
\begin{array}{llllll}
& \displaystyle \prod_{i=1}^n(U'+\frac{2b}{a_i})
+U''\sum_{i=1}^n\left(a_ix_i^2\prod_{j=1,\cdots,n\atop j\not=i}(U'+\frac{2b}{a_j})\right)\\
\geq &\displaystyle  \prod_{i=1}^n(U'+\frac{2b}{a_i})
+2sU'' \prod_{i=1}^{n-1}(U'+\frac{2b}{a_i})\\
= & \displaystyle
\left(\prod_{i=1}^{n-1}(U'+\frac{2b}{a_i})\right)\cdot \left(
U'+\frac{2b}{a_n}+2sU''
\right)\\
>&0.
\end{array}
$$
By \eqref{equ:ODE}, we have
$$
2sU''\geq 2(s+1)U''=-
\dfrac{\displaystyle (U')^n-c_0\prod_{i=1}^n(U'+\frac{2b}{a_i})}{
\displaystyle (U')^{n-1}-c_0\prod_{i=2}^n(U'+\frac{2b}{a_i})
}.
$$
Consequently,
$$
U'+ 2sU''\geq
\dfrac{\displaystyle  c_0\left(
\prod_{i=1}^n(U'+\frac{2b}{a_i})-U'\prod_{i=2}^n(U'+\frac{2b}{a_i})\right)}{
\displaystyle (U')^{n-1}-c_0\prod_{i=2}^n(U'+\frac{2b}{a_i})
}.
$$
Notice that for all $U'\geq 1$, in view of \eqref{equ:def-c_0},
$$
(U')^{n-1}-c_0\prod_{i=2}^n(U'+\frac{2b}{a_i})
=(U')^{n-1}\left(
1-c_0\prod_{i=2 }^n(1+\frac{2b}{a_i U'})
\right)>0.
$$
Combining the calculus above,   \eqref{equ:temp-3} is proved.

By \eqref{equ:subsolu-1} and Proposition 1.2 of \cite{Bao-Li-Li-2014},
\begin{equation}\label{equ:represent-sigmak}
  \sigma_k(\lambda(D^2u))=(U')^k\sigma_k(a)+U''(U')^{k-1}
  \sum_{i=1}^n(a_ix_i)^2\sigma_{k-1;i}(a).
\end{equation}
Since $U'\geq 1$ and $U''\leq 0$, together with \eqref{equ:temp-3} and the fact that
$$
\sigma_k(a)=\sigma_{k;i}(a)+a_i\sigma_{k-1;i}(a),
$$
for all $k=1,2,\cdots,n$,
we have
$$
\begin{array}{llll}
   \sigma_{k}\left(\lambda\left(D^{2} u\right)\right) &= &
   \displaystyle \left(U^{\prime}\right)^{k} \sigma_{k}(a)+U^{\prime \prime}\left(U^{\prime}\right)^{k-1} \sum_{i=1}^{n}a_ix_i^2 (\sigma_k(a)-\sigma_{k;i}(a))
   \\
&\geq & \left(U^{\prime}\right)^{k} \sigma_{k}(a)+2sU''(U')^{k-1}\sigma_k(a)>0.\\
\end{array}
$$
It follows that $D^2u>0$ in $\mathbb R^n\setminus\{0\}$.

By the calculus above, \eqref{equ:denominator} holds as long as
$$
\begin{array}{lll}
\displaystyle (U')^n\sigma_n(a)-c_0\prod_{i=1}^n(U'a_i+2b)+2s\sigma_n(a) U''(U')^{n-1} \\
\displaystyle -c_0 U''\sigma_n(a)\sum_{i=1}^n\left(a_ix_i^2
\prod_{j=1,\cdots,n\atop j\not=i}(U'+\frac{2b}{a_j})\right)&
\geq 0.
\end{array}
$$
Dividing both sides by $\sigma_n(a)>0$, it is equivalent to
\begin{equation}\label{equ:target}
  (U')^n-c_0\prod_{i=1}^n(U'+\frac{2b}{a_i})+ 2s U''(U')^{n-1}
-c_0 U''\sum_{i=1}^n\left(a_ix_i^2\prod_{j=1,\cdots,n\atop j\not=i}(U'+\frac{2b}{a_j})\right)\geq 0.
\end{equation}
By the assumptions, we have
$$
\sum_{i=1}^n\left(a_ix_i^2\prod_{j=1,\cdots,n\atop j\not=i}(U'+\frac{2b}{a_j})\right)
\geq 2s
\prod_{i=2}^n(U'+\frac{2b}{a_i})
$$
and
$$
\begin{array}{llll}
&\displaystyle 2s  U''(U')^{n-1}
-c_0 U''\sum_{i=1}^n\left(a_ix_i^2\prod_{j=1,\cdots,n\atop j\not=i}(U'+\frac{2b}{a_j})\right) \\
\geq &\displaystyle 2sU''\left(
(U')^{n-1}
-c_0\prod_{i=2}^n(U'+\frac{2b}{a_i})
\right).
\end{array}
$$
By a direct computation, as long as $U'\geq 1$,
$$
(U')^{n-1}
-c_0\prod_{i=2}^n(U'+\frac{2b}{a_i})
\geq (U')^{n-1}\cdot\left(1-\prod_{i=1}^n(\frac{a_i}{a_i+2b})\prod_{i=2}^n(\frac{a_i+2b}{a_i})\right)
=(U')^{n-1}\cdot\frac{2b}{a_1+2b}>0.
$$
Hence \eqref{equ:target} holds if $U'\geq 1, U''\leq 0$ and
$$
(U')^n-c_0\prod_{i=1}^n(U'+\frac{2b}{a_i})+2(s+1)U''\left((U')^{n-1}
      -c_0\prod_{i=2}^n(U'+\frac{2b}{a_i})
      \right)=0
$$
Especially when $U$ satisfies \eqref{equ:ODE}, $u(x)=U(s)$ becomes a subsolution of  \eqref{equ:Translated} and this finishes the proof.
\end{proof}
\begin{lemma}\label{Lem:Subsolution}
  For any $c\geq 0$, there exists a solution of \eqref{equ:ODE} with
  $U(0)=0$, $U'(0)\geq 1$ and
  \begin{equation}\label{equ:prescribe-asymptotics}
  U(s)=s+c+o(1)\quad\text{as }s\rightarrow\infty,
  \end{equation}
  if $a_1,\cdots,a_n$ satisfies
  \begin{equation}\label{equ:A-condition}
  \sum_{i=1}^n\frac{a_1+2b}{a_i+2b}>2.
  \end{equation}
\end{lemma}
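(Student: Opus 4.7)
The plan is to reformulate \eqref{equ:ODE} as a scalar first-order ODE for $p(s):=U'(s)$, since \eqref{equ:ODE} involves only $U'$ and $U''$; the function $U$ is then recovered by $U(s):=\int_{0}^{s}p(t)\,dt$, which automatically gives $U(0)=0$. Setting
$$
f(p):=p^{n}-c_{0}\prod_{i=1}^{n}\Bigl(p+\tfrac{2b}{a_{i}}\Bigr),\qquad g(p):=p^{n-1}-c_{0}\prod_{i=2}^{n}\Bigl(p+\tfrac{2b}{a_{i}}\Bigr),
$$
equation \eqref{equ:ODE} takes the form
$$
p'(s)=-\frac{f(p)}{2(s+1)\,g(p)}.
$$
The normalization \eqref{equ:def-c_0} gives $f(1)=0$, and a short calculation using \eqref{equ:def-c_0} produces
$$
g(1)=\frac{2b}{a_{1}+2b}>0,\qquad f'(1)=\sum_{i=1}^{n}\frac{2b}{a_{i}+2b}>0,
$$
together with $f(p),g(p)>0$ for all $p>1$. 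Thus $p\equiv 1$ is an equilibrium (corresponding to $c=0$), and every solution with $p(0)=p_{0}>1$ is strictly decreasing while remaining above $1$.

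For each $p_{0}\geq 1$, standard ODE theory supplies a unique local solution, and the uniform monotonicity and lower bound $p\geq 1$ extend it to all of $[0,\infty)$. The solution must decrease to some limit $L\geq 1$; if $L>1$ were possible, then near $s=\infty$ the bound $-p'(s)\geq \frac{f(L)}{3g(L)}\cdot\frac{1}{s+1}$ would force $p\to-\infty$, a contradiction, so $p(s)\to 1$. Defining $U(s):=\int_{0}^{s}p(t)\,dt$ then yields $U(0)=0$, $U'(0)=p_{0}\geq 1$, $U''=p'\leq 0$, and
$$
U(s)-s=\int_{0}^{s}\bigl(p(t)-1\bigr)\,dt\longrightarrow c(p_{0}):=\int_{0}^{\infty}\bigl(p(t;p_{0})-1\bigr)\,dt
$$
whenever this improper integral converges; in particular $U(s)=s+c(p_{0})+o(1)$.

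To ensure convergence and to realize every prescribed $c\geq 0$, I would linearize at the equilibrium. Writing $\varepsilon:=p-1$, one gets $\varepsilon'=-\frac{f'(1)}{2g(1)}\cdot\frac{\varepsilon}{s+1}+O\!\Bigl(\frac{\varepsilon^{2}}{s+1}\Bigr)$, and hence $p(s)-1\sim C(s+1)^{-\alpha}$ with
$$
\alpha=\frac{f'(1)}{2g(1)}=\tfrac{1}{2}\sum_{i=1}^{n}\frac{a_{1}+2b}{a_{i}+2b}.
$$
Hypothesis \eqref{equ:A-condition} is precisely the condition $\alpha>1$, which makes $c(p_{0})<\infty$. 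Continuous dependence on initial data, combined with a uniform tail bound $p(s;p_{0})-1\leq C_{+}(p_{0})(s+1)^{-\alpha}$ with $C_{+}$ locally bounded, produces a continuous map $c:[1,\infty)\to[0,\infty)$ with $c(1)=0$. To apply the intermediate value theorem it then suffices to show $c(p_{0})\to\infty$ as $p_{0}\to\infty$: for $p$ large one has $f(p)/g(p)\sim p$, so the ODE behaves like $p'\approx -p/[2(s+1)]$, giving $p(s)\approx p_{0}(s+1)^{-1/2}$ on an initial range $s\lesssim p_{0}^{2}$, which forces $c(p_{0})\gtrsim p_{0}^{2}$.

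The main technical obstacle is making the linearization at the equilibrium $p=1$ quantitative: upgrading the heuristic $p-1\sim (s+1)^{-\alpha}$ to two-sided bounds $C_{-}(s+1)^{-\alpha}\leq p(s)-1\leq C_{+}(s+1)^{-\alpha}$ valid for $s$ large, depending continuously on $p_{0}$. This is exactly where the strict inequality in \eqref{equ:A-condition} does essential work, turning the borderline rate $(s+1)^{-1}$ into the integrable rate $(s+1)^{-\alpha}$ with $\alpha>1$, and the remark after Theorem~\ref{Thm:existence} confirms that without this strict inequality a convergent $c(p_{0})$ need not exist.
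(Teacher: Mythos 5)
Your proposal follows essentially the same route as the paper's proof: reduce the second-order ODE \eqref{equ:ODE} to a first-order autonomous-in-$p$ equation for $p=U'$, verify that $p\equiv 1$ is the equilibrium and that solutions started above $1$ decrease monotonically to $1$, linearize at $p=1$ to obtain the decay rate $(s+1)^{-\alpha}$ with $\alpha=f'(1)/(2g(1))=\tfrac12\sum_i(a_1+2b)/(a_i+2b)$, which is exactly where \eqref{equ:A-condition} gives integrability, and finally hit every $c\geq 0$ by continuity in the initial value $p_0$ together with $c(1)=0$ and $c(p_0)\to\infty$. The only cosmetic difference is at the very last step: the paper establishes $c(p_0)\to\infty$ by first showing $\psi(s,\alpha)\to\infty$ pointwise (via a Gronwall-type bound $-g(\psi)<C\psi$) and then a contradiction on the tail integral, whereas you invoke the heuristic $f/g\sim p$ for large $p$ to bound the initial transient $\int_0^{\lesssim p_0^2}(p-1)\gtrsim p_0^2$; both rest on the same observation that the vector field grows at most linearly in $p$, so this is a presentational variant rather than a different argument.
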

\begin{proof}
  Step 1. existence of initial value problem
  \begin{equation}\label{equ:initialValue}
  \left\{
  \begin{array}{llll}
    \displaystyle \psi'(s)=\frac{g(\psi(s))}{s+1}, & \text{in }s\in(0,\infty),\\
    \psi(0)=\alpha \geq 1,\\
  \end{array}
  \right.
  \end{equation}
  where
  $$
  g(\psi):=-\dfrac{\displaystyle\psi^n-c_0\prod_{i=1}^n(\psi+\frac{2b}{a_i})}{
  \displaystyle 2\left(
  \psi^{n-1}-c_0\prod_{i=2}^n(\psi+\frac{2b}{a_i})
  \right)
  }.
  $$
  By a direct computation and \eqref{equ:def-c_0}, for any $\psi>1$,
  \begin{equation}\label{equ:fenzi}
  \dfrac{c_0\prod_{i=1}^n(\psi+\frac{2b}{a_i})}{\psi^n}=
  c_0\prod_{i=1}^n(1+\frac{2b}{a_i\psi})<c_0\prod_{i=1}^n(1+\frac{2b}{a_i})=1,
  \end{equation}
  for any $\psi\geq 1$,
  \begin{equation}\label{equ:fenmu}
  \dfrac{c_0\prod_{i=2}^n(\psi+\frac{2b}{a_i})}{\psi^{n-1}}=c_0\prod_{i=2}^n(1+\frac{2b}{a_i\psi})
  <c_0\prod_{i=1}^n(1+\frac{2b}{a_i})=1
  \end{equation}
  and $g(1)=1$. Thus for all $\psi>1$,
  $g(\psi)<0$ and $\psi=1$ is the unique root of $g(\psi)=0$ in $\psi\geq 1$.

  If  $\alpha=1$, then $\psi\equiv 1$ is the only solution by the uniqueness theorem of ordinary differential equations. If $\alpha >1$,  by the local existence result of ODE, $\psi$ exists in a neighbourhood of origin.  Since $\psi=1$ is the only root of $g(\psi)=0$ in the range of $\psi\geq 1$ and $g(\psi)<0$ as long as $\psi>1$, $\psi(s)$ is monotone decreasing with respect to $s$. Thus $\psi$ is bounded from above by $\alpha$ and hence by the extension theorem, $\psi$ exists on entire $s\in[0,\infty)$.
  Since $g(1)=0$, by the uniqueness theorem of ODE, $\psi(s)>1$ for all $s\in[0,\infty)$. Hence $\psi(s)$ admits a limit $\psi_{\infty}$ at infinity. We claim that the limit $\psi_{\infty}$ must be $1$, i.e.,
  \begin{equation}\label{equ:temp-4}
  \psi(s)\rightarrow 1\quad\text{as }s\rightarrow+\infty.
  \end{equation}
  By contradiction, if $\psi_{\infty}>1$, then by equation in \eqref{equ:initialValue}, there exists $\epsilon>0$ and $C=C(\psi_{\infty},\epsilon)>0$ such that
  $$
  \psi'(s)=\frac{g(\psi(s))}{s+1}<-\epsilon,\quad\forall~s>C.
  $$
  Integral over $s\in (C,\infty)$ and it contradicts to the fact that $\psi$ converge to $\psi_{\infty}$ at infinity. This finishes the proof of \eqref{equ:temp-4}.

  Step 2. refined asymptotics at infinity. By a direct computation and applying \eqref{equ:fenzi}, \eqref{equ:fenmu},
  $$g'(1)=
  -\frac{1}{2}\left(
  n-c_0\sum_{i=1}^n\prod_{j=1,\cdots,n\atop j\not=i}(1+\frac{2b}{a_j})
  \right)\cdot \left(
  1-c_0\prod_{i=2}^n(1+\frac{2b}{a_i})
  \right)^{-1}.
  $$
  Furthermore by \eqref{equ:def-c_0},
  $$
  c_0\sum_{i=1}^n\prod_{j=1,\cdots,n\atop j\not=i}(1+\frac{2b}{a_j})=
  \sum_{i=1}^n\left(
  \prod_{j=1}^n\frac{a_j}{a_j+2b}\prod_{j=1,\cdots,n\atop j\not=i}(\frac{a_j+2b}{a_j})
  \right)=\sum_{i=1}^n\frac{a_i}{a_i+2b}
  $$
  and
  $$
  \left(
  1-c_0\prod_{i=2}^n(1+\frac{2b}{a_i})
  \right)^{-1}=
  \left(
  1-\prod_{i=1}^n\frac{a_i}{a_i+2b}\prod_{i=2}^n \frac{a_i+2b}{a_i}
  \right)^{-1}=\frac{a_1+2b}{2b}.
  $$
  Thus
  $$
  g'(1)=-\frac{1}{2}\frac{a_1+2b}{2b}\sum_{i=1}^n\frac{2b}{a_i+2b}
  =-\frac{1}{2}\sum_{i=1}^n\frac{a_1+2b}{a_i+2b}=-\frac{\delta_0}{2}
  <-1
  $$
  and $\psi-1$ satisfies
  $$
  (\psi-1)'=\frac{g(\psi)-g(1)}{s+1}\quad\text{and}\quad\psi-1=o(1)
  $$
  as $s\rightarrow\infty$. By standard ODE analysis, we prove
  \begin{equation}\label{equ:asym-infnity-1}
  \psi(s)=1+O(s^{g'(1)})\quad\text{as }s\rightarrow\infty.
  \end{equation}
  In fact,
  $$
  t:=\ln(s+1)\in (0,+\infty),\quad \varphi(t):=\psi(s(t))-1
  $$
  satisfies
  $$
  \varphi'(t)=\psi'(s(t))\cdot e^t=g(\varphi+1)-g(1)=:h(\varphi)\quad\text{in }(0,+\infty).
  $$
  with
  $$
  h(0)=0,\quad h'(0)=g'(1)\quad\text{and}\quad\varphi(t)\rightarrow 0\quad\text{as }t\rightarrow+\infty.
  $$
  Since $g'(1)<0$ and
  $$
  h(\varphi)=g'(1)\varphi+O(\varphi^2)\quad\text{as }\varphi\rightarrow 0,
  $$
  by the asymptotic stability of ODE (see for instance Theorem 1.1 of Chap.13 in \cite{Book-Coddington} or Theorem 2.16 of \cite{Book-AsymptoticIntegration} for more detailed analysis), we have
  $$
  \varphi(t)=O(e^{g'(1)t})\quad\text{as }t\rightarrow+\infty,
  $$
  which is exactly \eqref{equ:asym-infnity-1}.

  Step 3. construction of solution of second order ODE \eqref{equ:ODE}.
  By taking
  $$
  U(s):=u_0+\int_0^s\psi(r)dr,\quad s\geq 0,
  $$
  we have $U'(s)=\psi(s)\geq 1$ and thus $U$ becomes a solution of \eqref{equ:ODE}.

  Step 4. computation on the asymptotics at infinity. By \eqref{equ:asym-infnity-1}, for any $\alpha\geq 1$, there exist a solution of \eqref{equ:initialValue} (denoted by $\psi(s,\alpha)$) and a constant $\mu(\alpha)\geq  0$ such that
  $$
  U(s)=u_0+s+\mu(\alpha)+O(s^{g'(1)+1})\quad\text{as }s\rightarrow\infty.
  $$
  More explicitly, by
  $$
  U(s)-s-u_0=\int_0^s(\psi(r,\alpha)-1)dr,
  $$
  sending $s\rightarrow +\infty$ we have
  \begin{equation}\label{equ:temp-5}
  \mu(\alpha)=\int_0^{+\infty}(\psi(s,\alpha)-1)d s.
  \end{equation}
  By the theorem of the differentiability of the solution with respect to the
initial value, we can differentiate \eqref{equ:initialValue} with respect to $\alpha$ and obtain
$$
v(s):=\frac{\partial\psi(s,\alpha)}{\partial\alpha}\quad\text{satisfies}\quad
\left\{
\begin{array}{llll}
  v'=\frac{g'(\psi(s,\alpha))}{s+1}\cdot v, & \text{in }s\in(0,\infty),\\
  v(0)=1,\\
\end{array}
\right.
$$
which is solved by
$$
v(s)=\exp \left(
\int_0^s\frac{g'(\psi(r,\alpha))}{r+1}d r
\right)>0.
$$
Hence $\psi(s,\alpha)$ is monotone increasing w.r.t. $\alpha$. Now we claim that for any fixed $s>0$,
\begin{equation}\label{equ:temp-6}
\lim_{\alpha\rightarrow+\infty}\psi(s,\alpha)=+\infty.
\end{equation}
By contradiction, we suppose there exist $s_1>0, M>1$ and $\{\alpha_k\}_{k=1}^{\infty}$ such that
$$
1<\alpha_k\rightarrow+\infty,\quad\text{as }k\rightarrow\infty\quad
\text{and}\quad \psi(s_1,\alpha_k)\leq M
$$
for all $k\in\mathbb N.$ By a direct computation, there exists $C>0$ independent of $\beta$ such that
$$
0=g(1)<-g(\psi)<C\psi,\quad\forall~\psi>1.
$$
By \eqref{equ:initialValue}, we have
$$
\dfrac{d\psi}{ C\psi}\leq \frac{d\psi}{-g(\psi)}=-\frac{d s}{s+1}.
$$
Integrate in $\psi\in (M,\alpha_k)$ for large $k$ and using $\psi(1,\alpha_k)=\alpha_k$, we have
$$
\int_M^{\alpha_k}\frac{d\psi}{C\psi}\leq \int_{\psi(s_1,\alpha_k)}^{\psi(1,\alpha_k)}\frac{d\psi}{-g(\psi)}=
-\int_{s_1}^1\frac{ds}{s+1}=\ln \frac{s_1+1}{2}<\infty.
$$
Sending $k\rightarrow+\infty$ and we have
$$
\frac{1}{C}\ln \frac{\alpha_k}{M}<\ln \frac{s_1+1}{2}<+\infty,
$$
which becomes a contradiction. Consequently we prove that
\begin{equation}\label{equ:temp-7}
\mu(\alpha)\rightarrow+\infty\quad\text{as }\alpha\rightarrow+\infty.
\end{equation}
By contradiction, if $\mu(\alpha)$ is uniformly bounded by some $M$ for all $\alpha\geq 1$, then by \eqref{equ:temp-5} we have
$$
2M\geq \int_0^{+\infty}(\psi(s,\alpha)-1)d s.
$$
Fix $s=1$ in \eqref{equ:temp-6}, then there exists sufficiently large $\alpha_0$ such that $\psi(1,\alpha_0)>3M+1$. Since $\psi(s,\alpha)\geq 1$ is monotone decreasing with respect to $s$, we have $\psi(s,\alpha_0)>3M+1$ for all $s\in(0,1)$. This leads to a contradiction and proves \eqref{equ:temp-7}.
On the other hand, when $\alpha=1$, we have
$$
U(s)=u_0+\int_0^s1d r=s+u_0,
$$
which gives $\mu(1)=0$. By the continuity of solution w.r.t $\alpha$, such a solution of \eqref{equ:ODE} with $U(0)=0$ and asymptotics \eqref{equ:prescribe-asymptotics} exists for all $c\geq 0$.
\end{proof}

By standard Perron's method, we have the following existence result.
\begin{theorem}\label{Thm:temp}
  For any given $c\geq u_0$ and $0<A\in\mathtt{Sym}(n)$ satisfying $G_{\tau}(\lambda(A))=C_0$  and \eqref{equ:A-condition-original},
  there exists a unique convex viscosity solution $u\in C^0(\mathbb R^n)$ of \eqref{equ:Translated}.
\end{theorem}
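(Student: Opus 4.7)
The plan is to apply Perron's method with a matching pair of explicit convex barriers, enforcing the value at the origin through a pointwise side condition in the admissible class.

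First I would fix the subsolution using Lemma~\ref{Lem:Subsolution}. Since $\mu(1)=0$, $\mu(\alpha)\to+\infty$ as $\alpha\to+\infty$, and $\alpha\mapsto\mu(\alpha)$ depends continuously on $\alpha$, the intermediate value theorem yields an $\alpha\geq 1$ with $\mu(\alpha)=c-u_0$, which is non-negative by the hypothesis $c\geq u_0$. Let $U$ be the associated solution of \eqref{equ:ODE} and set $\underline{u}(x):=U(s(x))$. By Lemma~\ref{Lem:subsolution-0}, $\underline{u}$ is a convex classical subsolution of \eqref{equ:translatedequ} in $\mathbb{R}^n\setminus\{0\}$; by construction $\underline{u}(0)=u_0$, and since $s\sim|x|^2$ together with $g'(1)+1=1-\delta_0/2$,
\[
\underline{u}(x)=\tfrac{1}{2}x^{T}Ax+c+O\bigl(|x|^{2-\delta_{0}}\bigr)\qquad\text{as }|x|\to\infty.
\]
For the supersolution I would take the asymptotic quadratic itself, $\overline{u}(x):=\tfrac{1}{2}x^{T}Ax+c$, which by $G_\tau(\lambda(A))=C_0$ is a classical convex solution on all of $\mathbb{R}^{n}$ with $\overline{u}(0)=c\geq u_0$. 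A comparison argument on the annulus $B_R\setminus\overline{B_r}$ with $r\to 0^+$ and $R\to\infty$ then yields $\underline{u}\leq\overline{u}$ in $\mathbb{R}^{n}$.

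Next I would form the Perron envelope
\[
u(x):=\sup\bigl\{v(x):v\in C^{0}(\mathbb{R}^{n})\text{ convex, viscosity subsolution of \eqref{equ:translatedequ} in }\mathbb{R}^{n}\setminus\{0\},\ v\leq\overline{u},\ v(0)\leq u_0\bigr\}.
\]
This class contains $\underline{u}$ and is closed under the usual Perron lifting on small balls $\overline{B_\rho(y)}\subset\mathbb{R}^{n}\setminus\{0\}$ (replacing $v$ on $B_\rho(y)$ by the classical convex solution of the Dirichlet problem with boundary data $v|_{\partial B_\rho(y)}$, which preserves convexity, the bound $v\leq\overline{u}$, and the constraint at the origin since that point lies outside the ball). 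Standard Perron arguments together with the comparison principle for the degenerate elliptic operator $G_\tau$ on convex functions then show $u$ is a continuous convex viscosity solution of \eqref{equ:translatedequ} in $\mathbb{R}^{n}\setminus\{0\}$. The sandwich $\underline{u}\leq u\leq\overline{u}$ immediately supplies the asymptotic expansion $u(x)=\tfrac{1}{2}x^{T}Ax+c+O(|x|^{2-\delta_0})$ as $|x|\to\infty$.

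The delicate step is verifying $u(0)=u_0$. The lower bound $u(0)\geq\underline{u}(0)=u_0$ is immediate. For the matching upper bound I would use convexity: for any admissible $v$, any $y\neq 0$ and $\lambda\in(0,1)$,
\[
v(\lambda y)\leq\lambda v(y)+(1-\lambda)v(0)\leq u_0+\lambda\bigl(\overline{u}(y)-u_0\bigr).
\]
Taking the supremum over $v$ and choosing $\lambda=|x|^{1/2}$ with $y=x/\lambda$ (so $|y|=|x|^{1/2}$), the quadratic growth of $\overline{u}$ forces $u(x)-u_0=O(|x|^{1/2})\to 0$, so $u$ is continuous at the origin with $u(0)=u_0$. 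Uniqueness follows from the same comparison principle on annuli, exploiting the common value $u_0$ at $0$ and the common $O(|x|^{2-\delta_0})$ decay at infinity. The main obstacle is precisely this pinning of the value at the origin: without the joint use of convexity, the upper barrier $\overline{u}$, and the explicit pointwise constraint $v(0)\leq u_0$ in the admissible family, the Perron supremum could a~priori jump to the larger value $c$ at the puncture; the hypothesis \eqref{equ:A-condition-original} enters only in guaranteeing that the subsolution of Lemma~\ref{Lem:Subsolution} has the stated algebraic decay rate, which in turn transfers to $u$ through the sandwich.
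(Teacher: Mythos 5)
Your proof is correct and follows the same Perron-method strategy as the paper: build the generalized symmetric subsolution $\underline u = U(\tfrac12 x^TAx)$ from Lemmas~\ref{Lem:subsolution-0} and~\ref{Lem:Subsolution}, take $\overline u = \tfrac12 x^TAx + c$ as the supersolution, trap the envelope between them, and invoke Perron together with the comparison principle. The only structural variation is cosmetic: you define the admissible class by $v(0)\leq u_0$ and $v\leq\overline u$, whereas the paper uses $v(0)=u_0$ and $\underline u\leq v\leq\overline u$; since $\underline u$ belongs to both classes and the envelope is sandwiched either way, the two yield the same function.

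The one place where your write-up adds genuine content is the pinning of the value at the origin. The paper asserts ``by a direct computation, $u\in C^0(\mathbb R^n)$ \dots $u(0)=u_0$'' without detail, and the issue is not entirely trivial: the constraint $v\leq\overline u$ alone only gives $u(x)\leq\overline u(x)\to c$ near $0$, which when $c>u_0$ does not by itself force $\limsup_{x\to0}u(x)\leq u_0$. Your convexity estimate
\[
v(\lambda y)\leq \lambda v(y)+(1-\lambda)v(0)\leq u_0+\lambda\bigl(\overline u(y)-u_0\bigr),
\]
with $\lambda=|x|^{1/2}$ and $y=x/\lambda$, gives the uniform modulus $v(x)\leq u_0+O(|x|^{1/2})$ over the whole admissible class, hence $u(x)\to u_0$ as $x\to0$. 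Combined with the lower bound $u\geq\underline u$ (continuous at $0$ with value $u_0$), this cleanly establishes both $u(0)=u_0$ and continuity at the puncture. So while the route is the same, your argument makes explicit the one step the paper leaves implicit, and it does so using only convexity and the quadratic upper barrier, which is exactly the right tool here.
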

\begin{proof}
 By Lemmas \ref{Lem:subsolution-0}, \ref{Lem:Subsolution}  and the computation at the start part of this section, for any $c\geq u_0$ we have a generalized symmetric subsolution $\underline u(x):=U(\frac{1}{2}x^TAx)$ with
  $$
  \underline u(0)=u_0\quad\text{and}\quad \underline u(x)=\frac{1}{2}x^TAx+c+O(|x|^{2-\delta_0})
  $$
  as $|x|\rightarrow\infty$.
  By a direct computation, $\underline u$ is convex near origin and satisfies
  $G_{\tau}(\lambda(D^2\underline u))\geq C_0$ in $\mathbb R^n\setminus\{0\}$.
  By
  Theorem \ref{thm:2.2},
  $$
  G_{\tau}(\lambda(D^2\underline u))\geq C_0\quad\text{in }\mathbb R^n.
  $$

  On the other hand,
  $$
  \overline u(x):=\frac{1}{2}x^TAx+c
  $$
  is a smooth convex solution of $G_{\tau}(\lambda(D^2u))= C_0$ in $\mathbb R^n$.
  From the proof of Lemma \ref{Lem:Subsolution}, we have $\underline u\leq  \overline u$ in $\mathbb R^n$. In fact by \eqref{equ:temp-7},
  $$
  \underline u(x)-\left(\frac{1}{2}x^TAx+c\right)=U-s-u_0-\mu_{\alpha}=\int_{+\infty}^s(\psi(r,\alpha)-1)dr<0,
  $$
  where $\alpha\geq 1$ is the unique constant such that $c=u_0+\mu_{\alpha}$.
  Let
  $$
  u(x):=\sup\{v(x):G_{\tau}(\lambda (D^2v))\geq C_0,~v(0)=u_0~\text{and}~ \underline u\leq v\leq \overline u,\text{ in }\mathbb R^n\setminus\{0\} \}.
  $$
  By a direct computation, $u\in C^0(\mathbb R^n)$ is convex, $u(0)=u_0$ and $u(x)\rightarrow \frac{1}{2}x^TAx+c$ at infinity. By Perron's method (see for instance \cite{Bao-Li-Li-2014,Hitoshi-PerronsMethod}), $u(x)$ defined above is a viscosity solution of \eqref{equ:Translated}. The uniqueness follows similarly by maximum principle as in Lemma \ref{Lem:maximumPrinciple}.
\end{proof}

Eventually, we finish this section by proving that the solution  found satisfies
\eqref{equ:Entire} and \eqref{equ:c-Geq-U0}. In fact, by equation $G_{\tau}(\lambda(D^2u))= C_0$ in $\mathbb R^n\setminus\{0\}$ and convexity $D^2u>0$, Theorem \ref{thm:2.2} implies $u$ is a subsolution on entire $\mathbb R^n$. From the proof of Theorem \ref{Thm:temp}, we have
$$
u(x)\leq \overline u(x)=\frac{1}{2}x^TAx+c\quad\text{in }\mathbb R^n.
$$
Hence \eqref{equ:c-Geq-U0} follows immediately.

\section{Proof of Proposition \ref{prop:no-genearlizedSol}}\label{Sec:NonExistence}

In this section, we prove Proposition \ref{prop:no-genearlizedSol}
following the line of Bao-Li-Li \cite{Bao-Li-Li-2014}. Let $U,s(x)$ be as in \eqref{equ:def-s}.

For $\tau=0$ case, the result follows from the classification results by Jin-Xiong \cite{Jin-Xiong}.

For $\tau\in(0,\frac{\pi}{4})$ case, we only need to prove the following result for $G_{\tau}$ operator and desired result follows
 by eigen-decomposition \eqref{equ:eigendecomposition-2}.
\begin{proposition}
  For any $0<s_1<s_2<\infty$, if there exists a generalized symmetric classical convex solution $u(x)=U(s)$ of
  \begin{equation}\label{equ:Gtau-cut}
  G_{\tau}(\lambda(D^2u))=C_0\quad\text{in }\left\{
  x:\frac{1}{2}x^TAx\in(s_1,s_2)
  \right\},
  \end{equation}
  where $A=\mathtt{diag}(a_1,\cdots,a_n)>0$ satisfies $G_{\tau}(\lambda(A))= C_0$. Then either
  \begin{equation}\label{equ:classify-nonexist}
  a_1=a_2=\cdots=a_n\quad\text{or}\quad u=\frac{1}{2}x^TAx +c
  \end{equation}
  for some $c\in\mathbb R$.
\end{proposition}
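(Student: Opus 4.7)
The plan is to reduce the generalized-symmetric PDE to an algebraic identity between a function of $s$ alone and a function of $x$, using the two closed-form expressions for $\sigma_n(\lambda(D^2u))$ and $\sigma_n(\lambda(D^2(u+b|x|^2)))$ already derived in Section~\ref{Sec:Existence}. For $\tau\in(0,\frac{\pi}{4})$, the condition $G_{\tau}(\lambda(D^2u))=C_0$ is equivalent to $\sigma_n(\lambda(D^2u))=c_0\,\sigma_n(\lambda(D^2u+2bI))$ with $c_0$ as in \eqref{equ:def-c_0}. Inserting \eqref{equ:subsolu-1} and \eqref{equ:subsolu-2} and cancelling the positive common factor $\sigma_n(a)$ produces
\[
(U')^n+2sU''(U')^{n-1}-c_0\prod_{i=1}^n\!\left(U'+\tfrac{2b}{a_i}\right)=c_0\,U''\sum_{i=1}^n a_i x_i^2\,P_i(s),
\]
where $P_i(s):=\prod_{j\neq i}\!\left(U'(s)+\tfrac{2b}{a_j}\right)$. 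The left side is a function of $s$ alone, so the right side must be as well, for every $x$ with $s(x)\in(s_1,s_2)$.

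Next I split on whether $U''$ vanishes identically on $(s_1,s_2)$. If $U''\equiv 0$, then $U(s)=ks+c$ for some constants, so $u(x)=\frac{k}{2}x^TAx+c$ and $D^2u=kA$. A direct computation gives
\[
\frac{d}{dk}G_{\tau}(\lambda(kA))=\frac{\sqrt{a^2+1}}{2b}\sum_{i=1}^n\frac{2b}{k(ka_i+2b)}>0\qquad(k>0),
\]
and this strict monotonicity, combined with $G_{\tau}(\lambda(kA))=C_0=G_{\tau}(\lambda(A))$, forces $k=1$, giving the second alternative in \eqref{equ:classify-nonexist}.

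If instead $U''(s_0)\neq 0$ at some $s_0\in(s_1,s_2)$, then $U''\neq 0$ on an open subinterval around $s_0$, and on that subinterval we may divide by $c_0U''$ to obtain $\sum_i a_ix_i^2 P_i(s)=R(s)$ for some function $R$. Restricting to the ellipsoidal slice $\{x:s(x)=s_0\}$ and making the change of variables $y_i:=a_ix_i^2\ge 0$, the slice maps surjectively onto the simplex $\{y\in\mathbb{R}^n_{\ge 0}:\sum_i y_i=2s_0\}$, and the identity becomes: the linear functional $\sum_i P_i(s_0)\,y_i$ is constant on this full simplex. Evaluating at the vertices $y=2s_0 e_k$ yields $P_1(s_0)=P_2(s_0)=\cdots=P_n(s_0)$. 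Since $U'(s_0)>0$ by convexity and $a_k,b>0$, every factor $U'(s_0)+\tfrac{2b}{a_k}$ is strictly positive, so the common factors in $P_i(s_0)=P_j(s_0)$ cancel, leaving $U'(s_0)+\tfrac{2b}{a_j}=U'(s_0)+\tfrac{2b}{a_i}$, hence $a_i=a_j$ for all $i,j$. This produces the first alternative in \eqref{equ:classify-nonexist}.

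The main obstacle is the algebraic bookkeeping in the separation step: identifying which terms in the PDE depend on $x$ only through $s$ and which genuinely couple to the individual $x_i$, and then recognising that equality of a linear functional on a \emph{full} simplex (not just on a hyperplane) forces coefficient-by-coefficient matching. Once one has the two identities \eqref{equ:subsolu-1}--\eqref{equ:subsolu-2} and the strict monotonicity of $k\mapsto G_{\tau}(\lambda(kA))$, the rest of the argument is purely algebraic.
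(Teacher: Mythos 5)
Your proposal is correct and follows essentially the same route as the paper: use \eqref{equ:subsolu-1} and \eqref{equ:subsolu-2} to reduce the generalized-symmetric PDE to an identity in which the $x$-dependence enters only through $\sum_i a_ix_i^2 P_i(s)$, then compare values along the coordinate axes (your simplex-vertex picture is exactly the paper's choice $x=(0,\dots,\sqrt{2s/a_{i_0}},\dots,0)$) to force the $P_i$, and hence the $a_i$, to coincide when $U''\not\equiv 0$. The one place where you go beyond the paper's terse ``immediately'' is the $U''\equiv 0$ branch: you correctly note that this only gives $U'=k$ constant, and you supply the strict monotonicity of $k\mapsto G_\tau(\lambda(kA))$ (equivalently, that $k^n=c_0\prod_i(k+2b/a_i)$ has the unique positive root $k=1$) to conclude $k=1$; this is a genuine small gap in the paper's exposition that your argument fills cleanly.
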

\begin{proof}
  For any $s\in (s_1,s_2)$ and $i_0=1,\cdots,n$, take
  $x=(0,\cdots,0,\sqrt{\frac{2s}{a_{i_0}}},0,\cdots,0)$. By \eqref{equ:subsolu-1} and \eqref{equ:subsolu-2}, $u(x)$ is a classical solution of \eqref{equ:Gtau-cut} if
  $$
  \dfrac{(U')^n\sigma_n(a)+2sU''(U')^{n-1}}{
  \displaystyle\prod_{i=1}^n(U'+\frac{2b}{a_i})
  +2sU''\prod_{i=1,\cdots,n\atop i\not=i_0}(U'+\frac{2b}{a_i})
  }=c_0.
  $$
  Consequently,
  \begin{equation}\label{equ:temp-2}
  \begin{array}{llllll}
    & \displaystyle (U')^n+2sU''(U')^{n-1}-c_0\prod_{i=1}^n(U'+\frac{2b}{a_i})
    \\
    =&\displaystyle c_02sU''\prod_{i=1\cdots,n\atop i\not=i_0}(U'+\frac{2b}{a_i}).\\
  \end{array}
  \end{equation}

  If $U''\equiv 0$ in $(s_1,s_2)$, then $u=\frac{1}{2}x^TAx+c$ immediately.

  If $U''(\overline s)\not=0$ at a point $\overline s\in (s_1,s_2)$, since the left hand side of \eqref{equ:temp-2} is independent of choice of $i_0$, we have
  $$
  2\overline sU''(\overline s)\prod_{i=1\cdots,n\atop i\not=i_0}(U'(\overline s)+\frac{2b}{a_{i_0}})= C(\overline s)\quad\forall~i_0=1,2,\cdots,n.
  $$
  Thus in this case, $$
  a_1=a_2=\cdots=a_n.
  $$
\end{proof}

For $\tau=\frac{\pi}{4}$ case, similarly by eigen-decomposition, we only need to prove the following result.
\begin{proposition}
  For any $0<s_1<s_2<\infty$, if there exists a generalized symmetric classical convex solution $u(x)=U(s)$ of
  \begin{equation}\label{equ:Gtau-cut-2}
  G_{\tau}(\lambda(D^2u))=C_0\quad\text{in }\left\{
  x:\frac{1}{2}x^TAx\in(s_1,s_2)
  \right\},
  \end{equation}
  where $A=\mathtt{diag}(a_1,\cdots,a_n)>0$ satisfies $G_{\tau}(\lambda(A))= C_0$. Then \eqref{equ:classify-nonexist} holds for some $c\in\mathbb R$.
\end{proposition}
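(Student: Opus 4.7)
The plan is to mimic the strategy just used for $\tau\in(0,\pi/4)$: substitute the ansatz $u(x)=U(s)$ into the equation and evaluate along coordinate axes to detect when $a_1,\ldots,a_n$ must all coincide. For $\tau=\pi/4$, the equation $G_{\tau}(\lambda(D^2u))=C_0$ reads
$$
\sum_{i=1}^n \frac{1}{\lambda_i(D^2u)}=\kappa,\qquad \kappa:=-\frac{C_0}{\sqrt{2}},
$$
and the hypothesis $G_{\tau}(\lambda(A))=C_0$ gives $\sum_{i=1}^n 1/a_i=\kappa$. Since $A>0$, this forces $\kappa>0$.

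The key computation is to evaluate $D^2u$ along the $i_0$-th coordinate axis. First I would record that
$$
D^2u(x)=U'(s)\,A+U''(s)\,(Ax)(Ax)^T,
$$
and at the test point $x=\sqrt{2s/a_{i_0}}\,e_{i_0}$ (which lies in the prescribed region $\{\tfrac12 x^TAx\in(s_1,s_2)\}$) the rank-one perturbation collapses to $U''(s)\cdot 2s\,a_{i_0}\cdot e_{i_0}e_{i_0}^T$. Thus $D^2u$ remains diagonal, with eigenvalues $U'(s)\,a_i$ for $i\neq i_0$ and $a_{i_0}\bigl(U'(s)+2sU''(s)\bigr)$ for $i=i_0$; convexity of $u$ guarantees $U'(s)+2sU''(s)>0$, so no division by zero will occur. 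Inserting these eigenvalues into $\sum 1/\lambda_i=\kappa$ and using the identity $\sum 1/a_i=\kappa$ to cancel the sum over $i\neq i_0$, a short algebraic simplification (multiply through by $U'$) yields
$$
\kappa\bigl(1-U'(s)\bigr)=\frac{1}{a_{i_0}}\cdot\frac{2s\,U''(s)}{U'(s)+2s\,U''(s)},\qquad s\in(s_1,s_2),\ i_0=1,\ldots,n.
$$

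Once this identity is in hand, the conclusion follows by dichotomy. The left-hand side is independent of $i_0$: if $U''(s_0)\neq 0$ at some $s_0\in(s_1,s_2)$, then the factor $2s_0U''(s_0)/\bigl(U'(s_0)+2s_0U''(s_0)\bigr)$ is nonzero, so $1/a_{i_0}$ must take the same value for every $i_0$, forcing $a_1=\cdots=a_n$. Otherwise $U''\equiv 0$ on $(s_1,s_2)$, so $U'$ is constant; substituting back into the displayed identity together with $\kappa>0$ gives $U'\equiv 1$, and integration produces $U(s)=s+c$, i.e.\ $u(x)=\tfrac12 x^TAx+c$.

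The main obstacle is really only bookkeeping: verifying that the convexity hypothesis secures positivity of $U'+2sU''$, and carefully exploiting $\sum 1/a_i=\kappa$ to collapse the $n$ equations into the single clean identity above. After that the argument runs in exact parallel to the proof just given for the $\tau\in(0,\pi/4)$ case.
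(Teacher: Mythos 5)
Your argument is correct, and it reaches the same conclusion by a cleaner route than the paper. The paper first rewrites the $\tau=\frac{\pi}{4}$ equation in the algebraic form $c_0\,\sigma_n(\lambda(D^2u))=\sigma_{n-1}(\lambda(D^2u))$, then invokes the $\sigma_k$-representation formula from Bao--Li--Li (their Proposition 1.2) to express both sides in terms of $U',U'',\sigma_n(a),\sigma_{n-1}(a),\sigma_{n-2;i_0}(a)$, evaluates along the axis $x=\sqrt{2s/a_{i_0}}\,e_{i_0}$, and finally uses the Newton-type identity $\sigma_{n-1}(a)=\sigma_{n-1;i_0}(a)+a_{i_0}\sigma_{n-2;i_0}(a)$ to conclude that $\sigma_{n-1;i_0}(a)=\prod_{j\neq i_0}a_j$ is independent of $i_0$, forcing all $a_i$ to coincide. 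You bypass the elementary-symmetric-polynomial machinery entirely by observing that at the test point the Hessian stays diagonal, reading off its eigenvalues directly as $U'a_i$ ($i\neq i_0$) and $a_{i_0}(U'+2sU'')$, and then plugging into $\sum 1/\lambda_i=\kappa$ and cancelling with $\sum 1/a_i=\kappa$. This collapses to the single scalar identity
$$\kappa\bigl(1-U'(s)\bigr)=\frac{1}{a_{i_0}}\cdot\frac{2s\,U''(s)}{U'(s)+2s\,U''(s)},$$
whose left-hand side visibly does not depend on $i_0$, and the dichotomy ($U''\not\equiv 0$ gives all $a_{i_0}$ equal; $U''\equiv 0$ with $\kappa>0$ gives $U'\equiv 1$) follows immediately. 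Both proofs hinge on the same essential idea — test along the $n$ coordinate axes and exploit $i_0$-independence — but your version avoids the external $\sigma_k$ formula and the side remark in the paper about arranging $U'(\bar s)\neq 0$, since convexity of $u$ already forces $U'>0$ and $U'+2sU''>0$ everywhere. Nothing is missing; the bookkeeping you flagged (positivity of $U'+2sU''$ and the cancellation of the $i\neq i_0$ sum) is handled correctly.
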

\begin{proof}
  For any $s\in (s_1,s_2)$ and $i_0=1,\cdots,n$, take
  $x=(0,\cdots,0,\sqrt{\frac{2s}{a_{i_0}}},0,\cdots,0)$. As a necessary condition for such solution exists, $C_0<0$.
  Since $u(x)$ is a classical solution of \eqref{equ:Gtau-cut-2}, we have
  $$
  c_0\cdot \sigma_n(\lambda(D^2u))=\sigma_{n-1}(\lambda(D^2u)),
  $$
  where $c_0=-\frac{\sqrt 2}{2}C_0>0.$
  By \eqref{equ:subsolu-1} and Proposition 1.2 of \cite{Bao-Li-Li-2014},
  $$
  \begin{array}{llll}
  &\displaystyle c_0\cdot (U')^n\sigma_n(a)+c_0\cdot U''(U')^{n-1}\sigma_n(a)\sum_{i=1}^na_ix_i^2\\
  =&\displaystyle
  (U')^{n-1}\sigma_{n-1}(a)
  +U''(U')^{n-2}\sum_{i=1}^n(a_ix_i)^2\sigma_{n-2;i}(a),\\
  \end{array}
  $$
  i.e.,
  \begin{equation}\label{equ:temp-9}
   c_0 (U')^n\sigma_n(a)+2c_0sU''(U')^{n-1}\sigma_n(a)
    -(U')^{n-1}\sigma_{n-1}(a)=
    2sU''(U')^{n-2}a_{i_0}\sigma_{n-2;i_0}(a).
  \end{equation}

  If $U''\equiv 0$ in $(s_1,s_2)$, then $U'=\frac{\sigma_{n-1}(a)}{c_0\sigma_n(a)}=1$ and $u=\frac{1}{2}x^TAx+c$ is a quadratic solution.

  If $U''(\overline s)\not\equiv 0$ at a point $\overline s\in(s_1,s_2)$, since the left hand side of \eqref{equ:temp-9} is independent of choice of $i_0$, we have
  $$
  2sU''(U')^{n-2}a_{i_0}\sigma_{n-2;i_0}(a)=C(\overline s)\quad\forall ~ i_0=1,2,\cdots,n.
  $$
  By the continuity of solution, we may assume without loss of generality that $U'(\overline s)\not=0$, otherwise there exists $\overline s'$ close to $\overline s$ such that $U''(\overline s')\not=0$ and $U'(\overline s')\not=0$. Together with the fact that
  $$
  \sigma_{n-1}(a)=\sigma_{n-1;i}(a)+a_i\sigma_{n-2;i}(a)\quad\forall ~ i=1,2,\cdots,n,
  $$
  we have $\sigma_{n-1;i_0}(a)$ is independent of the choice of $i_0$. Thus in this case, all $a_{i_0}$ are the same.
\end{proof}

For $\tau\in(\frac{\pi}{4},\frac{\pi}{2})$, the equations  are very similar to $\tau=\frac{\pi}{2}$ case and only need to prove for the latter case for simplicity.

\iffalse

If $A$ has non-positive eigenvalues, then there exists $x\in\mathbb R^n\setminus\{0\}$ such that $\frac{1}{2}x^TAx=0$. Hence in this case, the generalized symmetric solution of \eqref{equ:translatedequ} is a classical solution of
$$
G_{\tau}(\lambda(D^2u))=C_0\quad\text{in }\mathbb R^n.
$$
By the Bernstein-type results as in \cite{Yu.Yuan1,Yu.Yuan2}, $u$ is a quadratic polynomial and the proof is finished. Hence we only need to prove for  $A=\mathtt{diag}(a_1,\cdots,a_n)>0$ and $0<s_1<s_2<\infty$.

\fi

\begin{proposition}
  For any $0<s_1<s_2<\infty$, if there exists a generalized symmetric classical convex solution $u(x)=U(s)$ of
  \begin{equation}\label{equ:Gtau-cut-3}
  \sum_{i=1}^n\arctan\lambda_i(D^2u)=C_0\quad\text{in }\left\{
  x:\frac{1}{2}x^TAx\in(s_1,s_2)
  \right\},
  \end{equation}
  where $A=\mathtt{diag}(a_1,\cdots,a_n)>0$ satisfies $\sum_{i=1}^n\arctan a_i=  C_0$. Then \eqref{equ:classify-nonexist} holds up to some constant $c$.
\end{proposition}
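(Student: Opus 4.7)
My plan is to adapt the axis-point substitution used in the $\tau\in(0,\pi/4)$ and $\tau=\pi/4$ cases, but first to convert the special Lagrangian equation into a polynomial identity in elementary symmetric functions. Using $\prod_{j=1}^n(1+\sqrt{-1}\,\lambda_j)=\sum_{k=0}^n \sqrt{-1}^{\,k}\sigma_k(\lambda)$ and separating real and imaginary parts, the relation $\sum_j\arctan\lambda_j=C_0$ becomes
$$
\sum_{k=0}^n c_k(C_0)\,\sigma_k(\lambda(D^2u))=0,
$$
with the coefficients $c_k(C_0)$ as in the Remark following Theorem \ref{Thm:existence}. I would then plug in the representation formula \eqref{equ:represent-sigmak} and evaluate at the axis point $x=(0,\dots,0,\sqrt{2s/a_{i_0}},0,\dots,0)$, which kills every term in the inner sum except the one with $i=i_0$. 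This gives, for each $i_0\in\{1,\dots,n\}$,
$$
P(s)+2s\,a_{i_0}\,U''(s)\,R_{i_0}(U'(s))=0,
$$
where $P(s):=\sum_{k=0}^n c_k(C_0)(U'(s))^k\sigma_k(a)$ does not depend on $i_0$, and $R_{i_0}(t):=\sum_{k=1}^n c_k(C_0)\,t^{k-1}\sigma_{k-1;i_0}(a)$.

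If $U''\equiv 0$ on $(s_1,s_2)$, then $U(s)=\alpha s+\beta$, and substituting back forces $\sum_i\arctan(\alpha a_i)=C_0=\sum_i\arctan a_i$; strict monotonicity of $\alpha\mapsto\sum_i\arctan(\alpha a_i)$ for $\alpha>0$ gives $\alpha=1$, hence $u=\tfrac12 x^T\!Ax+\beta$. Otherwise $U''(\bar s)\neq 0$ at some $\bar s$, so by continuity $U''\neq 0$ on an open subinterval $I\subset(s_1,s_2)$, and $U'\!\mid_I$ is a diffeomorphism onto an open interval $J\subset\mathbb R$. Subtracting the above identity for different indices $i_0,i_1$ cancels $P(s)$ and yields $a_{i_0}R_{i_0}(U'(s))=a_{i_1}R_{i_1}(U'(s))$ on $I$; as $U'(s)$ sweeps $J$, this becomes the polynomial identity $a_{i_0}R_{i_0}(t)\equiv a_{i_1}R_{i_1}(t)$. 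Matching coefficients of $t^{k-1}$ produces
$$
c_k(C_0)\,a_{i_0}\,\sigma_{k-1;i_0}(a)=c_k(C_0)\,a_{i_1}\,\sigma_{k-1;i_1}(a),\qquad k=1,\dots,n.
$$

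Since $c_1(C_0)=\cos C_0$ and $c_2(C_0)=\sin C_0$ cannot both vanish and the assumption $n\ge 3$ places both indices $1,2$ inside $\{1,\dots,n-1\}$, I can pick $k^*\in\{1,2\}$ with $c_{k^*}(C_0)\neq 0$. Then $a_{i_0}\sigma_{k^*-1;i_0}(a)$ (equivalently $\sigma_{k^*;i_0}(a)=\sigma_{k^*}(a)-a_{i_0}\sigma_{k^*-1;i_0}(a)$) is independent of $i_0$, and the combinatorial identity $\sigma_{k;i_0}(a)-\sigma_{k;i_1}(a)=(a_{i_1}-a_{i_0})\sigma_{k-1;i_0,i_1}(a)$, together with $\sigma_{k^*-1;i_0,i_1}(a)>0$ (valid because $k^*-1\le n-2$ and all $a_j>0$), forces $a_{i_0}=a_{i_1}$ for every pair.

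The main obstacle I anticipate is the passage from the pointwise identity at each $s$ to the polynomial identity in $t$: one must identify an open subinterval on which $U''\neq 0$ so that $U'$ is a diffeomorphism onto an interval of $t$-values, thereby promoting an equality of real numbers into an equality of polynomials. Once that is in hand, the remainder is symmetric-function combinatorics. The dimension hypothesis $n\ge 3$ enters precisely to guarantee a usable index $k^*\in\{1,2\}\cap\{1,\dots,n-1\}$ even when one of $\cos C_0,\sin C_0$ happens to vanish.
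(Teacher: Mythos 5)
Your proof is correct and follows the paper's strategy closely: convert the special Lagrangian equation to its elementary symmetric function form via $\prod(1+\sqrt{-1}\lambda_j)$, evaluate the generalized-symmetric representation \eqref{equ:represent-sigmak} at the axis point, and split on whether $U''$ vanishes identically. The execution of the non-degenerate case differs, though. The paper moves the $i_0$-independent terms to one side, samples the resulting identity at $n-1$ distinct values $r_1<\cdots<r_{n-1}$, and inverts a Vandermonde-type matrix, with a parenthetical remark that the degenerate cases $\sin C_0=0$ or $\cos C_0=0$ are handled by restricting to the nonzero columns. You instead difference the identities for two indices $i_0,i_1$ (so the common part $P(s)$ cancels), promote the resulting pointwise equality on the open interval $J=U'(I)$ to a polynomial identity in $t$, and read off coefficients, noting that it suffices to use $k^*\in\{1,2\}$ where at least one of $c_1(C_0)=\cos C_0$, $c_2(C_0)=\sin C_0$ is nonzero. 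This is cleaner: it avoids the explicit Vandermonde determinant, handles $\sin C_0=0$ or $\cos C_0=0$ without a special case, and makes the role of $n\geq 3$ precise (it guarantees $\sigma_{k^*-1;i_0,i_1}(a)>0$ when $k^*=2$). Your $U''\equiv 0$ case is also more explicitly justified, via strict monotonicity of $\alpha\mapsto\sum_i\arctan(\alpha a_i)$, where the paper simply asserts $U'\equiv 1$.
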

\begin{proof}
  By the algebraic form of \eqref{equ:Gtau-cut-3} (see for instance \cite{CNS-DirichletIII}), $\lambda(D^2u)$ satisfies
  \begin{equation*}
\cos C_0 \sum_{0 \leq 2 k+1 \leq n}(-1)^{k} \sigma_{2 k+1}\left(\lambda\left(D^{2} u\right)\right)-\sin C_0 \sum_{0 \leq 2 k \leq n}(-1)^{k} \sigma_{2 k}\left(\lambda\left(D^{2} u\right)\right)=0.
\end{equation*}
  For any $s\in (s_1,s_2)$ and $i_0=1,\cdots,n$, take
  $x=(0,\cdots,0,\sqrt{\frac{2s}{a_{i_0}}},0,\cdots,0)$.
  Since $u(x)$ is a classical solution of \eqref{equ:Gtau-cut-3},  by \eqref{equ:subsolu-1} and \eqref{equ:represent-sigmak} we have
  $$
  \begin{array}{lll}
  & \displaystyle \cos C_0\left(
  \sum_{0\leq 2k+1\leq n}(-1)^k\left((U')^{2k+1}\sigma_{2k+1}(a)+2sU''(U')^{2k}a_{i_0}\sigma_{2k;i_0}(a)\right)
  \right)\\
  =& \displaystyle \sin C_0\left(
\sum_{0\leq 2k\leq n}(-1)^k\left((U')^{2k}\sigma_{2k}(a)+2sU''(U')^{2k-1}a_{i_0}\sigma_{2k-1;i_0}(a)\right)
  \right),
  \end{array}
  $$
  i.e.,
  \begin{equation}\label{equ:temp-10}
  \sum_{k=0}^{n-2}2s p_k U''(U')^ka_{i_0}\sigma_{k;i_0}(a)=
  \left(
  \begin{array}{llllll}
  2s p_1 U''\\
  2s p_2 U''U'\\
   2s p_3 U''(U')^2\\
  \vdots\\
  2s p_{n-2} U''(U')^{n-2}\\
  \end{array}
  \right)^T\cdot
  \left(
  \begin{array}{llll}
   a_{i_0}\\
   a_{i_0}\sigma_{1;i_0}(a)\\
   \vdots\\
   a_{i_0}\sigma_{n-2;i_0}(a)\\
  \end{array}
  \right)
  =G(s),
  \end{equation}
  where
  $$
  p_k=(-1)^{k}\cos \left(C_0+\left(\frac{k}{2}-\left[\frac{k}{2}\right]\right)\pi\right)
  $$
  for $k=0,1,\cdots,n-1$
  and
  $$
  \begin{array}{llll}
  G(s)&=&\displaystyle -\cos C_0\left(
  \sum_{0\leq 2k+1\leq n}(-1)^k(U')^{2k+1}\sigma_{2k+1}(a)
  \right)\\
  &&\displaystyle+\sin C_0\left(
  \sum_{0\leq 2k\leq n}(-1)^k(U')^{2k}\sigma_{2k}(a)
  \right)\\
  &&\displaystyle- 2s p_{n-1}U''(U')^{n-1}\sigma_n(a).\\
  \end{array}
  $$
  Since $\sin C_0=0$ and $\cos C_0=0$ cannot hold at the same time, we may only consider the non-zero part of \eqref{equ:temp-10} and the following proof still works. Hereinafter, we only prove for $\sin C_0\cdot\cos C_0\not=0$ case.

  If $U''\equiv 0$ in $(s_1,s_2)$, then $U'\equiv 1$ and $u=\frac{1}{2}x^TAx+c$ is a quadratic polynomial.

  If $U''(\overline s)\not\equiv 0$ at a point $\overline s\in(s_1,s_2)$,
  then by the continuity of solution, we may assume without loss of generality that $U''(s)\not=0$ and $U'(s)\not=0$ in $(s_3,s_4)\subset (s_1,s_2)$. Pick $n-1$ points such that $$
  s_3<r_1<r_2<\cdots<r_{n-1}<s_4,\quad r_i\not=0
  $$
  and we prove that the matrix formed by coefficients of \eqref{equ:temp-10} is invertible, i.e.,
  \begin{equation}\label{equ:temp-11}
  \det \left(
  \begin{array}{ccccc}
    2r_1 p_1U''(r_1) & \cdots &  2r_1 p_{n-2}U''(r_1)(U'(r_1))^{n-2}\\
    2r_2 p_1U''(r_2) &   \cdots &  2r_2  p_{n-2}U''(r_2)(U'(r_2))^{n-2}\\
    \vdots & \ddots & \vdots\\
    2r_n p_1U''(r_{n-1}) &  \cdots &
   2r_{n-1} p_{n-2}U''(r_{n-1})(U'(r_{n-1}))^{n-2}\\
  \end{array}
  \right)\not=0.
  \end{equation}
  By \eqref{equ:temp-10} and \eqref{equ:temp-11},  $(a_{i_0},\cdots,a_{i_0}\sigma_{n-1;i_0}(a))$ is a constant vector independent of $i_0$. Thus in this case, all $a_{i_0}$ are the same.

  To prove \eqref{equ:temp-11}, since $\sin C_0\cdot\cos C_0\not=0$, $0<r_1<\cdots<r_{n-1}$, $U''(s)\not=0$ and $U'(s)\not=0$, by the determinate of Vandermonde form matrix, we have
  $$
  \det \left(
  \begin{array}{cccccc}
    1 & U'(r_1) & \cdots & (U'(r_1))^{n-2}\\
    1 & U'(r_2) & \cdots & (U'(r_2))^{n-2}\\
    \vdots & \vdots & \ddots & \vdots\\
    1 & U'(r_{n-1}) & \cdots & (U'(r_{n-1}))^{n-2}\\
  \end{array}
  \right)=\prod_{i>j\atop
  i,j=1,2,\cdots,n-1}(U'(r_i)-U'(r_j))\not=0.
  $$
  Hence \eqref{equ:temp-11} is proved and the result follows immediately.
\end{proof}

\small

\bibliographystyle{plain}

\bibliography{IsolatedSingularity}

\bigskip

\noindent Z.Liu \& J. Bao

\medskip

\noindent  School of Mathematical Sciences, Beijing Normal University\\
Laboratory of Mathematics and Complex Systems, Ministry of Education\\
Beijing 100875, China \\[1mm]
Email: \textsf{liuzixiao@mail.bnu.edu.cn}\\[1mm]
Email: \textsf{jgbao@bnu.edu.cn}

%\bigskip

%\noindent J. Bao, J. Xiong \& Z. Zhou

%\medskip

%\noindent  School of Mathematical Sciences, Beijing Normal University\\
%Laboratory of Mathematics and Complex Systems, Ministry of Education\\
%Beijing 100875, China \\[1mm]
%Email: \textsf{jgbao@bnu.edu.cn, jx@bnu.edu.cn, zhouziwei@mail.bnu.edu.cn}

\end{document}